\newtheorem{theorem}{Theorem}[section]
\newtheorem{corollary}[theorem]{Corollary}
\newtheorem{lemma}[theorem]{Lemma}
\newtheorem{proposition}[theorem]{Proposition}
\numberwithin{equation}{section}\setcounter{secnumdepth}{3}
\newcommand\NN {{\mathbb N}}
\newcommand\QQ {{\mathbb Q}}
\newcommand\RR {{\mathbb R}}
\newcommand\TT {{\mathbb T}}
\newcommand\ZZ {{\mathbb Z}}
\newcommand\sltwoz{{\rm SL(2,\ZZ)}}
\newcommand\autgroup{{\rm Aut}}
\newcommand\id{{\rm Id}}
\newcommand\distance{{\rm Dist }}
\newcommand{\slope}{{\rm Slope }}
\newcommand\cA{{\mathcal{A}  }}
\newcommand\cE{{\mathcal{E}  }}
\newcommand\cK{{\mathcal{K}  }}
\newcommand\cO{{\mathcal{O}  }}
\newcommand\cQ{{\mathcal{Q}  }}
\newcommand\cT{{\mathcal{T}  }}
\begin{document}



\title{A genus 4 origami with minimal hitting time and an intersection property}

\author{Luca Marchese}

\address{Dipartimento di Matematica, Universit\`a di Bologna, Piazza di Porta San Donato 5, 40126, Bologna, Italia}

\email{luca.marchese4@unibo.it}


\begin{abstract}
In a minimal flow, the hitting time is the exponent of the power law, as $r$ goes to zero, for the time needed by orbits to become $r$-dense. 
We show that on the so-called \emph{Ornithorynque} origami the hitting time of the flow in an irrational slope equals the diophantine type of the slope. We give a general criterion for such equality. 
In general, for genus at least two, hitting time is strictly bigger than diophantine type.
\end{abstract}

\maketitle


\section{Introduction}

An \emph{origami}, also known as \emph{square-tiled surface}, is a surface obtained glueing copies of the square $[0,1]^2$ along the boundaries. 
On a given origami, any $\alpha\in\RR$ defines a \emph{linear flow} in slope $\alpha$, whose dynamical properties are related to the diophantine properties of $\alpha$. This reflects a more general principle in \emph{Teichm\"uller dynamics}. \cite{ForniMatheusIntroduction} gives an introduction to the subject and a selection of the many relevant references. In this paper we consider a special genus $4$ origami called \emph{Ornithorynque} (see \S~\ref{SectionOrnithorynqueOrigami}). Our main Theorem~\ref{TheoremMainTheorem} states that on such origami the \emph{hitting time} in any slope $\alpha$ equals the diophantine type of $\alpha$. This is the minimal possible value for the hitting time (Lemma~\ref{LemmaLowerBoundHittingTime}), and in many cases the equality does not hold, according to~\cite{KimMarcheseMarmi}. We prove Theorem~\ref{TheoremMainTheorem} stating a general criterion based on a specific intersection property, namely Theorem~\ref{TheoremGeneralCriterion}, and showing that the Ornithorynque satisfies the intersection property (see \S~\ref{SectionIntersectionPropertyOrnithorynque}). This extends to the Ornithorynque results previously proved in~\cite{KimMarcheseMarmi} for the genus $3$ origami called \emph{Eierlgende Wollmilchsau}.

\subsection{Origamis and linear flows}
\label{SectionOrigamisAndLinearFlows}

Fix a finite set $\cQ$ and a pair $(h,v)$ of permutations of $\cQ$ generating a transitive subgroup 
$\langle h,v\rangle$ of the symmetric group. 
For any $j\in\cQ$ let $Q_j:=\{j\}\times[0,1]^2$ the $j$-th copy of the unit square. 
Denote by $l_j$, $r_j$, $b_j$, $t_j$ the copies of the four sides  
$$
l:=\{0\}\times[0,1]
\textrm{ , }
r:=\{1\}\times[0,1]
\textrm{ , }
b:=[0,1]\times\{0\}
\textrm{ , }
t:=[0,1]\times\{1\}.
$$
For any $j\in\cQ$ paste the right side $r_j$ of $Q_j$ to the left side $l_{h(j)}$ of $Q_{h(j)}$ and the top side $t_j$ of $Q_j$ to the bottom side $b_{v(j)}$ of $Q_{v(j)}$. An origami $X$ is a surface arising in this way. It is compact, connected, orientable and without boundary. 
We have a covering \footnote{Define it on $\cQ\times[0,1]^2$ as $\rho_X\big((j,x)\big):=[x]$. 
This gives a map on $X$ because glued points have the same image.}
\begin{equation}
\label{EquationCoveringOverTorus}
\rho_X:X\to\TT^2
\end{equation}
over the standard torus $\TT^2:=\RR^2/\ZZ^2$, ramified only over $[0]\in\TT^2$, where 
$[x]$ denotes the coset of $x\in\RR^2$. The points $p_1,\dots,p_m$ in $X$ where $\rho_X$ is ramified are in bijection with the cycles of the commutator $[v,h]:=v^{-1}h^{-1}vh$. Let $k_1,\dots,k_m$ in $\NN$ be such that for any $1\leq j\leq m$ the cycle of $[v,h]$ corresponding to $p_j$ has length $k_j+1$. 
The surface $X$ inherits a metric with a conical angle $2(k_j+1)\pi$ at any $p_j$ and which is flat outside these points. If $g$ is the genus of $X$, then $k_1+\dots+k_m=2g-2$. 
Details can be found in~\cite{ForniMatheusIntroduction}, while \S~\ref{SectionOrnithorynqueOrigami} below describes an explicit example. 

\smallskip

Fix $\alpha\in\RR\cup\{\infty\}$ and set $e_\alpha:=(\sin\theta,\cos\theta)$, where 
$
\theta:=\arctan{\alpha}\in(-\pi/2,\pi/2]
$, 
that is the unit vector $e_\alpha$ with slope $\alpha$. 
The \emph{linear flow} 
$
\phi_\alpha:\RR\times X\to X
$ 
on $X$ is the continuous flow determined for any $p\in X$ and $t\in\RR$ by 
\begin{equation}
\label{EquationDefinitionLinearFlow}
\rho_X\big(\phi_\alpha(t,p)\big)=\rho_X(x)+te_\alpha\mod\ZZ^2.
\end{equation}
Equation~\eqref{EquationDefinitionLinearFlow} determines $k_j+1$ trajectories starting at any conical point $p_j$, which may or may not be defined for any $t\geq0$, where the trajectory stops at $t=t_0$ if $\phi_\alpha(t_0,p_j)$ is also a conical point. Similarly we have $k_j+1$ trajectories ending in $p_j$. 
We call \emph{singular leaves} such trajectories. The flow $\phi_\alpha$ is a regular $\RR$-action outside singular leaves.
If $\alpha\in\RR\setminus\QQ$, then $\phi_\alpha$ is \emph{uniquely ergodic}, that is the Lebesgue measure of $X$ is the only invariant measure. This implies that any positive-infinite orbit is dense. On the other hand, if $\alpha\in\QQ$ then any infinite orbit is periodic, moreover periods take finitely many values.

\subsection{The \emph{Ornithorynque} origami}
\label{SectionOrnithorynqueOrigami}

Consider the set $\cQ:=\ZZ/3\ZZ\times\ZZ/2\ZZ\times\ZZ/2\ZZ$ and let $X_\cO$ be the origami defined by the pair 
$(h,v)$ of permutations of $\cQ$ given by 
$$
h\left(
\begin{array}{c}
(i,0,0) \\
(i,0,1) \\
(i,1,0) \\
(i,1,1)
\end{array}
\right):=
\left(
\begin{array}{c}
(i+1,1,0) \\
(i-1,1,1) \\
(i,0,0) \\
(i,0,1)
\end{array}
\right)
$$
and
$$
v\left(
\begin{array}{c}
(i,0,0) \\
(i,0,1) \\
(i,1,0) \\
(i,1,1)
\end{array}
\right):=
\left(
\begin{array}{c}
(i-1,0,1) \\
(i,0,0) \\
(i+1,1,1) \\
(i,1,0)
\end{array}
\right).
$$

Figure~\ref{FigureOrnithorynque} represents the origami $X_\cO$. Half of the $24$ pairs of identified sides are represented by dotted lines. The other $12$ pairs are named by letters $A_i,B_i,C_i,D_i$ with $i\in\ZZ/3\ZZ$. There are three conical points $p_1,p_2,p_3$ with orders $k_1=k_2=k_3=2$, that is a conical angle $6\pi$ at each conical point. Figure~\ref{FigureOrnithorynque} shows $3$ big squares with size  $2\times 2$. The $12$ vertices of these big squares are identified to $p_1$, the $6$ middle points of the horizontal sides correspond to $p_2$ and the $6$ middle points of the vertical sides correspond to $p_3$. From the relation $2g-2=k_1+k_2+k_3$ we get that $X_\cO$ has genus $g=4$.

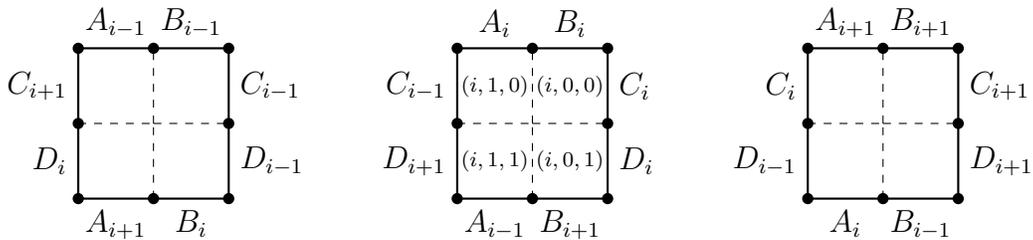
\begin{figure}[h]
\begin{center}  
\begin{tikzpicture}[scale=1]


\draw[-,thick] (-1,-1) -- (0,-1) node[pos=0.5,below] {$A_{i+1}$};
\draw[-,thick] (0,-1) -- (1,-1) node[pos=0.5,below] {$B_i$};
\draw[-,dashed] (-1,0) -- (0,0) {};
\draw[-,dashed] (0,0) -- (1,0) {};
\draw[-,thick] (-1,1) -- (0,1) node[pos=0.5,above] {$A_{i-1}$};
\draw[-,thick] (0,1) -- (1,1) node[pos=0.5,above] {$B_{i-1}$};

\draw[-,thick] (-1,-1) -- (-1,0) node[pos=0.5,left] {$D_i$};
\draw[-,thick] (-1,0) -- (-1,1) node[pos=0.5,left] {$C_{i+1}$};
\draw[-,dashed] (0,-1) -- (0,0) {};
\draw[-,dashed] (0,0) -- (0,1) {};
\draw[-,thick] (1,-1) -- (1,0) node[pos=0.5,right] {$D_{i-1}$};
\draw[-,thick] (1,0) -- (1,1) node[pos=0.5,right] {$C_{i-1}$};

\node [circle,fill,inner sep=1.5pt] at (-1,-1) {};
\node [circle,fill,inner sep=1.5pt] at (0,-1) {};
\node [circle,fill,inner sep=1.5pt] at (1,-1) {};
\node [circle,fill,inner sep=1.5pt] at (-1,0) {};
\node [circle,fill,inner sep=1.5pt] at (1,0) {};
\node [circle,fill,inner sep=1.5pt] at (-1,1) {};
\node [circle,fill,inner sep=1.5pt] at (0,1) {};
\node [circle,fill,inner sep=1.5pt] at (1,1) {};

\end{tikzpicture}
\hspace{0.5 cm}
\begin{tikzpicture}[scale=1]


\draw[-,thick] (-1,-1) -- (0,-1) node[pos=0.5,below] {$A_{i-1}$};
\draw[-,thick] (0,-1) -- (1,-1) node[pos=0.5,below] {$B_{i+1}$};
\draw[-,dashed] (-1,0) -- (0,0) {};
\draw[-,dashed] (0,0) -- (1,0) {};
\draw[-,thick] (-1,1) -- (0,1) node[pos=0.5,above] {$A_i$};
\draw[-,thick] (0,1) -- (1,1) node[pos=0.5,above] {$B_i$};

\draw[-,thick] (-1,-1) -- (-1,0) node[pos=0.5,left] {$D_{i+1}$};
\draw[-,thick] (-1,0) -- (-1,1) node[pos=0.5,left] {$C_{i-1}$};
\draw[-,dashed] (0,-1) -- (0,0) {};
\draw[-,dashed] (0,0) -- (0,1) {};
\draw[-,thick] (1,-1) -- (1,0) node[pos=0.5,right] {$D_i$};
\draw[-,thick] (1,0) -- (1,1) node[pos=0.5,right] {$C_i$};

\node [circle,fill,inner sep=1.5pt] at (-1,-1) {};
\node [circle,fill,inner sep=1.5pt] at (0,-1) {};
\node [circle,fill,inner sep=1.5pt] at (1,-1) {};
\node [circle,fill,inner sep=1.5pt] at (-1,0) {};
\node [circle,fill,inner sep=1.5pt] at (1,0) {};
\node [circle,fill,inner sep=1.5pt] at (-1,1) {};
\node [circle,fill,inner sep=1.5pt] at (0,1) {};
\node [circle,fill,inner sep=1.5pt] at (1,1) {};

\node at (-0.5,-0.5) {\tiny$(i,1,1)$};
\node at (-0.5,0.5) {\tiny$(i,1,0)$};
\node at (0.5,-0.5) {\tiny$(i,0,1)$};
\node at (0.5,0.5) {\tiny$(i,0,0)$};

\end{tikzpicture}
\hspace{0.5 cm}
\begin{tikzpicture}[scale=1]


\draw[-,thick] (-1,-1) -- (0,-1) node[pos=0.5,below] {$A_i$};
\draw[-,thick] (0,-1) -- (1,-1) node[pos=0.5,below] {$B_{i-1}$};
\draw[-,dashed] (-1,0) -- (0,0) {};
\draw[-,dashed] (0,0) -- (1,0) {};
\draw[-,thick] (-1,1) -- (0,1) node[pos=0.5,above] {$A_{i+1}$};
\draw[-,thick] (0,1) -- (1,1) node[pos=0.5,above] {$B_{i+1}$};

\draw[-,thick] (-1,-1) -- (-1,0) node[pos=0.5,left] {$D_{i-1}$};
\draw[-,thick] (-1,0) -- (-1,1) node[pos=0.5,left] {$C_i$};
\draw[-,dashed] (0,-1) -- (0,0) {};
\draw[-,dashed] (0,0) -- (0,1) {};
\draw[-,thick] (1,-1) -- (1,0) node[pos=0.5,right] {$D_{i+1}$};
\draw[-,thick] (1,0) -- (1,1) node[pos=0.5,right] {$C_{i+1}$};

\node [circle,fill,inner sep=1.5pt] at (-1,-1) {};
\node [circle,fill,inner sep=1.5pt] at (0,-1) {};
\node [circle,fill,inner sep=1.5pt] at (1,-1) {};
\node [circle,fill,inner sep=1.5pt] at (-1,0) {};
\node [circle,fill,inner sep=1.5pt] at (1,0) {};
\node [circle,fill,inner sep=1.5pt] at (-1,1) {};
\node [circle,fill,inner sep=1.5pt] at (0,1) {};
\node [circle,fill,inner sep=1.5pt] at (1,1) {};

\end{tikzpicture}
\end{center}
\caption{The Ornithorynque origami $X_\cO$.}
\label{FigureOrnithorynque}
\end{figure}

The surface $X_\cO$ was discovered by Forni and Matheus in the preprint~\cite{ForniMatheusPreprint}, and then included in a larger family of surfaces in~\cite{ForniMatheusZorich}. After Delecroix and Weiss, the origami $X_\cO$ was named \emph{Ornithorynque} (french for Platypus), as a rare example of surface with totally degenerate \emph{Lyapunov spectrum}. Previously, in~\cite{ForniWollmilchsau}, Forni discovered the only other known example with such property, which is a genus $g=3$ surface $X_\cE$ called in german \emph{Eierlegende Wollmilchsau}. The surface $X_\cE$ was first introduced  in~\cite{HerrlichSchmithusen} and its name was given by Herrlich, M\"oller and Schmith\"usen, referring to its peculiar algebro-geometrical properties, which make $X_\cE$ a source of counterexamples in Teichm\"uller theory.

\subsection{Main statement}
\label{SectionMainStatement}

Recall that the \emph{diophantine type} of $\alpha\in\RR$ is 
$$
w(\alpha):=
\sup\left\{w>0:
\big|\alpha-p/q\big|<\frac{1}{q^{w+1}}\text{ for infinitely many }p/q\in\QQ
\right\},
$$
where as usual any fraction $p/q$ is written with co-prime $p$ and $q$. We always have $w(\alpha)\geq1$ by Dirichlet's Theorem. Moreover $w(\alpha)=1$ for almost any 
$\alpha$. Fix an origami $X$ and $\alpha\in\RR$. For any $p\in X$ and $r>0$, the time needed by the positive $\phi_\alpha$-orbit of $p$ to become $r$-dense is 
$$
T(X,\alpha,p,r):=
\sup\bigg\{\widetilde{p}\in X:
\inf\big\{t>r:\distance\big(\phi_\alpha(t,p),\widetilde{p}\big)<r\big\}
\bigg\},
$$ 
where $\distance(\cdot,\cdot)$ is the distance on $X$, which equals the euclidean distance on small enough discs in $X\setminus\{p_1,\dots,p_m\}$. Minimality implies that $T(X,\alpha,p,r)$ is defined for any $p$ outside singular leaves. In general, the scaling law of $T(X,\alpha,p,r)$ as $r\to 0$ has an irregular behaviour. Nevertheless it can be bounded from above by a power law $r^{-H}$, where the best exponent $H=H(X,\alpha,p)$, called \emph{hitting time}, is defined by 
$$
H(X,\alpha,p):=
\limsup_{r \to 0^+} \frac{\log T(X,\alpha,p,r)}{-\log r}.
$$

\begin{theorem}
\label{TheoremMainTheorem}
Let $X_\cO$ be the Ornithorynque origami. Then for any $\alpha$ irrational and any $p$ outside of singular leaves we have 
$$
H(X_\cO,\alpha,p)= w(\alpha).
$$
\end{theorem}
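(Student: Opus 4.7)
Following the strategy outlined in the introduction, my plan is to combine two ingredients: the general lower bound $H(X_\cO,\alpha,p)\geq w(\alpha)$ supplied by Lemma~\ref{LemmaLowerBoundHittingTime}, and the general criterion of Theorem~\ref{TheoremGeneralCriterion}, which provides the reverse inequality $H \leq w(\alpha)$ for any origami satisfying the intersection property. The theorem then follows at once from the verification that $X_\cO$ enjoys the intersection property, which is the subject of Section~\ref{SectionIntersectionPropertyOrnithorynque}.

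Conceptually, on the base torus $\TT^2$ the identity $H=w(\alpha)$ is classical, and via the covering $\rho_{X_\cO}$ any approach in $X_\cO$ descends to an approach in $\TT^2$. The obstruction to transferring the torus estimate to the origami is that distinct preimages in $X_\cO$ of a single point in $\TT^2$ lie in different sheets of the cover: an orbit that returns close to a target on $\TT^2$ need not match the target's sheet. The intersection property is the quantitative assertion that, at each scale, a bounded extension of a returning orbit already visits every sheet. Hence the torus-scale hitting time $r^{-w(\alpha)-\varepsilon}$ will suffice to approach any target on $X_\cO$ within distance $r$, yielding the desired upper bound.

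To carry out the verification concretely on $X_\cO$ I would use the explicit combinatorics from Section~\ref{SectionOrnithorynqueOrigami}. First, I would parametrize the cylinder decomposition of $X_\cO$ in each rational direction using the permutations $h,v$ and the visible $\ZZ/3\ZZ$-symmetry in Figure~\ref{FigureOrnithorynque}. Next, I would exploit the fact that $X_\cO$ (like the Wollmilchsau $X_\cE$ treated in \cite{KimMarcheseMarmi}) has totally degenerate Lyapunov spectrum, which forces saddle connections on the origami to be essentially as long as their projections to $\TT^2$ and gives rigid control on the geometry along the Rauzy--Veech / Teichm\"uller renormalization. Finally, I would show that each elementary step of the renormalization permutes the sheets in a way such that, in a bounded number of steps, every sheet is reached from any starting sheet; this is an irreducibility-type statement that should flow from the transitivity of $\langle h,v\rangle$ on $\cQ$.

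The main obstacle I anticipate is this last combinatorial step: showing uniformly in the slope $\alpha$ that the shuffling of sheets along the renormalization cascade exhausts every element of $\cQ$ in a bounded number of steps. For the Wollmilchsau the deck group is the quaternion group acting on $8$ elements, whose small order and high symmetry make the case analysis tractable. On the Ornithorynque the group $\langle h,v\rangle$ acts on $12$ elements and there are three conical points rather than two, so one must track three distinct local structures simultaneously. The practical challenge will be to organize the bookkeeping compactly, presumably by leveraging the $\ZZ/3\ZZ$-rotational symmetry of Figure~\ref{FigureOrnithorynque} to collapse the case analysis to a manageable count.
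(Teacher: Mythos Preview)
Your high-level strategy is exactly the paper's: invoke Lemma~\ref{LemmaLowerBoundHittingTime} for the lower bound, Theorem~\ref{TheoremGeneralCriterion} for the upper bound, and feed in the intersection property for $X_\cO$. One point you leave implicit but the paper makes explicit is Proposition~\ref{PropositionOrbitOrnithorynque}: the hypothesis of Theorem~\ref{TheoremGeneralCriterion} demands the intersection property for \emph{every} $Y\in\sltwoz\cdot X$, and it is only because $\sltwoz\cdot X_\cO=\{X_\cO\}$ that a single verification suffices.

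Where your proposal diverges is in how you plan to verify the intersection property itself. The paper's argument in \S\ref{SectionIntersectionPropertyOrnithorynque} is entirely static and combinatorial: it encodes a segment by its \emph{cutting sequence} in the alphabet $\cA=\{A_i,B_i,C_i,D_i\}$, shows (Lemmas~\ref{LemmaVerticalCrossesAllTiles}--\ref{LemmaHorizontalCrossesAllTiles}) that any segment whose cutting sequence has at least $6$ letters meets all three tiles $\cT_i$, and then (Lemmas~\ref{LemmaIntersectionTwoHorizontalCylinders}--\ref{LemmaIntersectionOneHorizontalCylinder}) forces the two segments into a common unit square, where Lemma~\ref{LemmaSegmentsSameSquareIntersect} finishes. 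No renormalization, no Lyapunov spectrum, no monodromy along Rauzy--Veech steps enters.

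Your proposed route via degenerate Lyapunov spectrum and sheet-mixing under renormalization is conceptually interesting but does not obviously land on the required statement. The intersection property in Theorem~\ref{TheoremGeneralCriterion} is a claim about \emph{arbitrary} segments $H,V$ of prescribed slope ranges in a \emph{fixed} surface; it is not a statement about a flow orbit visiting sheets over time. Your ``every sheet is reached in boundedly many renormalization steps'' is a dynamical/monodromy assertion, and translating it into ``any long enough $H$ with $\slope(H)<-1$ meets any long enough $V$ with $0<\slope(V)<1$'' would itself require a geometric argument of the type the paper supplies directly. Moreover, the degeneracy of the Kontsevich--Zorich spectrum controls growth rates of homology classes, not pairwise intersections of finite segments, so invoking it here is at best heuristic. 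If you want to pursue your approach you would still need to bridge this gap; the paper's cutting-sequence method bypasses it entirely.
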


Theorem~\ref{TheoremGeneralCriterion} below proves the identity $H(X,\alpha,p)= w(\alpha)$ in a more general setting. The non-trivial inequality is 
$
H(X,\alpha,p)\leq w(\alpha)
$, 
which holds for any origami $X$ satisfying a specific intersection property. 
Proposition~\ref{PropositionMainIntersectionProperty} and Corollary~\ref{CorollaryMainIntersectionProperty} below show that $X_\cO$ satisfies such property. 
The same is true for the Eierlegende Wollmilchsau $X_\cE$ (\S~8.2 in~\cite{KimMarcheseMarmi}). 
Such property fails for any genus $2$ origami with one conical point (Lemma~6.5 in~\cite{KimMarcheseMarmi}). 
\emph{Cyclic covers} in~\cite{MatheusYoccoz} are a natural candidate for testing the assumption of Theorem~\ref{SectionProofMainTheorem} and thus proving the identity between diophantine type and hitting time. The easier inequality in Theorem~\ref{TheoremGeneralCriterion} and Theorem~\ref{TheoremMainTheorem} is implicit in~\cite{KimMarcheseMarmi}. We state it as follows (a proof is in \S~\ref{SectionProofLowerBoundHittingTime}).

\begin{lemma}
\label{LemmaLowerBoundHittingTime}
Let $X$ be any origami and $\alpha$ be an irrational slope. For any $p$ outside singular leaves we have 
$$
H(X,\alpha,p)\geq w(\alpha).
$$
\end{lemma}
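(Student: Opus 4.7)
The plan is to push the $\phi_\alpha$-orbit on $X$ down to the torus via the covering $\rho_X\colon X\to\TT^2$ from~\eqref{EquationCoveringOverTorus}, and then to invoke the classical identity $H(\TT^2,\alpha,q)=w(\alpha)$ for the linear flow on the standard torus.

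First I would use the defining construction of $X$ to note that $\rho_X$ is a local isometry outside the conical points, so every path in $X$ projects to a path in $\TT^2$ of the same length; consequently $\rho_X$ is distance non-increasing, i.e.\ $\distance\big(\rho_X(q_1),\rho_X(q_2)\big)\leq\distance(q_1,q_2)$ for all $q_1,q_2\in X$, where we abuse notation and write $\distance$ for the flat distance on either surface. Moreover, by~\eqref{EquationDefinitionLinearFlow}, $\rho_X$ conjugates $\phi_\alpha$ on $X$ to the linear flow $\phi_\alpha^{\TT^2}$ of slope $\alpha$ on $\TT^2$.

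Next I would observe that these two facts together imply that every $r$-dense orbit segment on $X$ projects to an $r$-dense orbit segment on $\TT^2$. Indeed, given any target $\widetilde{q}\in\TT^2$, pick any lift $\widetilde{p}\in\rho_X^{-1}(\widetilde{q})$; $r$-density of the $\phi_\alpha$-orbit of $p$ on $X$ produces some $t\in(r,T]$ with $\distance\big(\phi_\alpha(t,p),\widetilde{p}\big)<r$, and projecting with $\rho_X$ gives $\distance\big(\phi_\alpha^{\TT^2}(t,\rho_X(p)),\widetilde{q}\big)<r$. Hence
$$
T\big(\TT^2,\alpha,\rho_X(p),r\big)\leq T(X,\alpha,p,r),
$$
and passing to the $\limsup$ yields $H\big(\TT^2,\alpha,\rho_X(p)\big)\leq H(X,\alpha,p)$. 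Note that the assumption that $p$ is outside singular leaves on $X$ guarantees that $\rho_X(p)$ has a torus orbit avoiding $[0]$, so that $H(\TT^2,\alpha,\rho_X(p))$ is well-defined.

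To conclude, I would invoke the classical identity $H(\TT^2,\alpha,q)=w(\alpha)$ for every irrational $\alpha$ and every non-singular $q\in\TT^2$. The standard proof proceeds by taking the Poincar\'e return map of $\phi_\alpha^{\TT^2}$ to the horizontal circle $\TT^1\times\{0\}$, which is the irrational rotation $x\mapsto x+\alpha\mod 1$, and using the three-distance theorem together with the continued fraction expansion of $\alpha$ to match the exponent of the hitting time of the rotation with the diophantine type $w(\alpha)$; this is the content implicit in~\cite{KimMarcheseMarmi}. Combining this with the previous inequality gives $w(\alpha)\leq H(X,\alpha,p)$, as desired. The only genuine technical point, rather than an obstacle, is to verify that the multiplicative geometric constants relating density on the transversal to $r$-density on $\TT^2$ (the return time $1/\cos\theta$ and the horizontal-versus-Euclidean gap ratio) are absorbed into the $\limsup$ and do not perturb the exponent; this is routine because both sides of the identity $H(\TT^2,\alpha,q)=w(\alpha)$ are invariant under rescaling time and under changing transversals.
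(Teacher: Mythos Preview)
Your argument is correct and considerably shorter than the paper's. You exploit the branched covering $\rho_X$ to reduce immediately to the genus-one case and then quote the Kim--Seo identity $H(\TT^2,\alpha,q)=w(\alpha)$ (this is~\cite{KimSeo}, rather than~\cite{KimMarcheseMarmi}). The paper instead argues intrinsically on $X$: after a quick area estimate giving $H\geq1$, it fixes $1<w<w(\alpha)$, selects partial quotients with $a_{n+1}\geq q_n^{w-1}$, renormalizes by $g(a_1,\dots,a_{2k})^{-1}$ to a fixed surface $X_0$ in the $\sltwoz$-orbit, and uses the vertical cylinder decomposition of $X_0$ to show that the renormalized orbit is trapped in a single cylinder for time $\gtrsim q_{2k}^{w-1}$, whence $T(X,\alpha,p,r_k)\gtrsim q_{2k}^{w}$ for $r_k\asymp q_{2k}^{-1}$. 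Your route is more elementary and makes the dependence on the torus result transparent; the paper's route stays within its own renormalization framework and, being phrased via cylinder decompositions, is the version that extends to general translation surfaces where no covering to a torus is available. One small inaccuracy in your write-up: the claim that $\rho_X(p)$ avoids $[0]$ need not hold (non-conical vertices of $X$ also map to $[0]$), but this is harmless since $\TT^2$ has no singular leaves and $H(\TT^2,\alpha,\cdot)$ is defined everywhere.
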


For any origami $X$ and any $\alpha$ irrational, the function $p\mapsto H(X,\alpha,p)$ is invariant under $\phi_\alpha$ (Lemma~4.2 in~\cite{KimMarcheseMarmi}). Thus $H(X,\alpha,\cdot)$ is constant almost everywhere. Theorem~\ref{TheoremMainTheorem} was proved on the standard torus $X=\TT^2$ in~\cite{KimSeo}. 
Proposition~2.5 in~\cite{KimMarcheseMarmi} extends the same result to the Eierlegende Wollmilchsau $X_\cE$. On the other hand, for any origami $X$ with genus $g=2$ and an unique conical point of order $k_1=2$, Theorem~2.2 in~\cite{KimMarcheseMarmi} proves that for any $\lambda\in[1,2]$ there are directions $\alpha$ with 
$$
H(X,\alpha,p)=w(\alpha)^\lambda
\quad\text{ for almost any }\quad p\in X.
$$
For $X$ with the same topological data, we have $H(X,\alpha,p)\leq w(\alpha)^2$ for any $\alpha$ and any $p$ outside singular leaves (Theorem~2.1 in~\cite{KimMarcheseMarmi}). 
Proposition~4.6 in~\cite{KimMarcheseMarmi} proves that for any origami $X$ and $\alpha$ irrational we have
$$
\liminf_{r\to0}
\frac
{\log\big(\inf\big\{t>r:\distance\big(\phi_\alpha(t,p),\widetilde{p}\big)<r\big\}\big)}{-\log r}=1
\quad\text{ for almost any }\quad
p,\widetilde{p}\in X.
$$
Combining the last result and Theorem~\ref{TheoremMainTheorem}, and recalling that generically $w(\alpha)=1$, we get that for almost any $\alpha$ and almost any $p,\widetilde{p}$ in $X_\cO$ there exists the limit
$$
\lim_{r\to0}
\frac
{\log\big(\inf\big\{t>r:\distance\big(\phi_\alpha(t,p),\widetilde{p}\big)<r\big\}\big)}{-\log r}=1.
$$
The limit above was established for generic \emph{interval exchange transformations} in~\cite{KimMarmi}. Most results quoted from~\cite{KimMarcheseMarmi} are proved in the general setting of \emph{translation surfaces}.

\subsection*{Contents of this paper}

In \S~\ref{SectionBackground} we describe the action of $\sltwoz$ over the set of origamis, which fixes $X_\cO$. 
In \S~\ref{SectionIntersectionPropertyOrnithorynque} we state and prove 
Proposition~\ref{PropositionMainIntersectionProperty} and Corollary~\ref{CorollaryMainIntersectionProperty}, which establish that $X_\cO$ satisfies the intersection property in Theorem~\ref{TheoremGeneralCriterion}. 
In \S~\ref{SectionGeneralCriterion} we revise continued fractions in terms of $\sltwoz$ and use them as a renormalization tool to prove Theorem~\ref{TheoremGeneralCriterion}. 
The proof of Theorem~\ref{TheoremMainTheorem} is resumed in \S~\ref{SectionProofMainTheorem}. 
In \S~\ref{SectionProofLowerBoundHittingTime} we prove Lemma~\ref{LemmaLowerBoundHittingTime}.

\subsection*{Acknowledgements}

The author is grateful to D. H. Kim, S. Marmi and C. Matheus.

\section{Background}
\label{SectionBackground}

Let $\sltwoz$ be the group of $2\times2$ matrices $A$ with coefficients in $\ZZ$ and determinant 
$\det(A)=1$. In particular we consider the following elements
\begin{equation}
\label{EquationGeneratorsSL(2,Z)}
T:=
\begin{pmatrix}
1 & 1 \\ 
0 & 1
\end{pmatrix}
\quad\text{;}\quad
V:=
\begin{pmatrix}
1 & 0 \\ 
1 & 1
\end{pmatrix}
\quad\text{;}\quad
R:=
\begin{pmatrix}
0 & -1 \\ 
1 & 0
\end{pmatrix}.
\end{equation}
Any $A\in\sltwoz$ acts projectively on points $\alpha\in\RR\cup\{\infty\}$ by 
$$
A\cdot\alpha:=\frac{a\alpha+b}{c\alpha+d}
\quad\text{ where }\quad
A=
\begin{pmatrix}
a & b\\
c & d
\end{pmatrix}.
$$

\subsection{Action of $\sltwoz$}
\label{SectionActionSL(2,Z)}

Fix an origami $X$, defined by permutations $(h,v)$ of a finite set $\cQ$. Fix $A\in\sltwoz$ and consider the parallelogram $P:=A([0,1]^2)$. For $j\in\cQ$ the $j$-th copy $P_j:=\{j\}\times P$ has sides 
$$
\widetilde{l}_j:=\{j\}\times A(l)
\quad\text{;}\quad
\widetilde{r}_j:=\{j\}\times A(r)
\quad\text{;}\quad
\widetilde{b}_j:=\{j\}\times A(b)
\quad\text{;}\quad
\widetilde{t}_j:=\{j\}\times A(t),
$$
where the sides $l,r,b,t$ of $[0,1]^2$ are defined in \S~\ref{SectionOrigamisAndLinearFlows}. For any $j\in\cQ$, paste the side $\widetilde{r}_j$ of $P_j$ to the side $\widetilde{l}_{h(j)}$ of $P_{h(j)}$ and the side $\widetilde{t}_j$ of $P_j$ to the side $\widetilde{b}_{v(j)}$ of $P_{v(j)}$. Let $A\cdot X$ be the corresponding surface, which is compact, connected, orientable and without boundary. 
Moreover $A\cdot X$ is an origami, corresponding to a pair $(h^{(A)},v^{(A)})$ of permutations of $\cQ$. It is possible to see from the commutator $[h^{(A)},v^{(A)}]$ that $A\cdot X$ has the same number of conical points as $X$, with same orders $k_1,\dots,k_m$, and thus also the same genus (see~\cite{ForniMatheusIntroduction} for details). For the matrix $T$ in Equation~\eqref{EquationGeneratorsSL(2,Z)} we have 
$$
h^{(T)}=h
\quad\text{ and }\quad
v^{(T)}=v\circ h^{-1},
$$
while for the matrix $V$ in Equation~\eqref{EquationGeneratorsSL(2,Z)} we have 
$$
h^{(V)}=h\circ v^{-1}
\quad\text{ and }\quad
v^{(V)}=v.
$$
Since $T,V$ generate $\sltwoz$, we can compute $(h^{(A)},v^{(A)})$ from $(h,v)$ for any $A\in\sltwoz$.

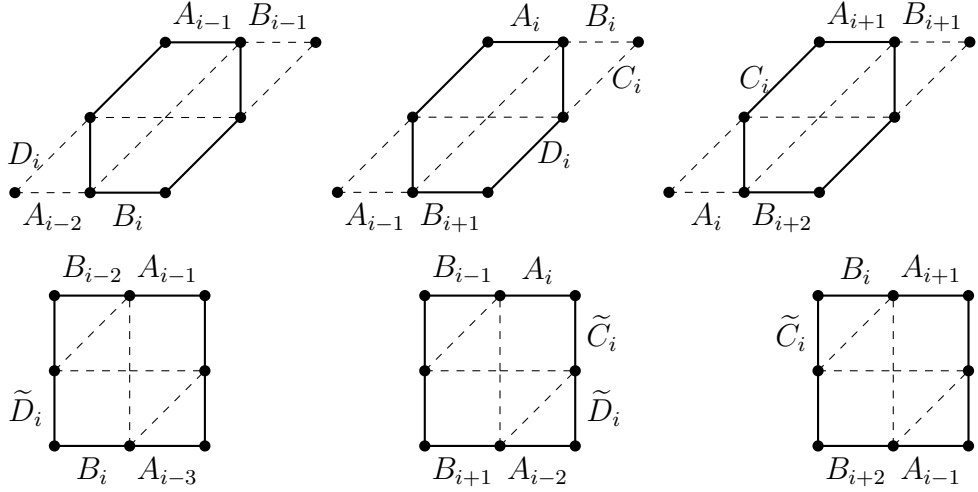
\begin{figure}[h]
\begin{center}


\begin{tikzpicture}[scale=1]


\draw[-,dashed] (-2,-1) -- (-1,-1) node[pos=0.5,below] {$A_{i-2}$};
\draw[-,thick] (-1,-1) -- (0,-1) node[pos=0.5,below] {$B_i$};
\draw[-,dashed] (-1,0) -- (0,0) {};
\draw[-,dashed] (0,0) -- (1,0) {};
\draw[-,thick] (0,1) -- (1,1) node[pos=0.5,above] {$A_{i-1}$};
\draw[-,dashed] (1,1) -- (2,1) node[pos=0.5,above] {$B_{i-1}$};

\draw[-,dashed] (-2,-1) -- (-1,0) node[pos=0.5,left] {$D_i$};
\draw[-,thick] (-1,0) -- (0,1) {};
\draw[-,dashed] (-1,-1) -- (0,0) {};
\draw[-,dashed] (0,0) -- (1,1) {};
\draw[-,thick] (0,-1) -- (1,0) {};
\draw[-,dashed] (1,0) -- (2,1) node[pos=0.5,left] {};

\draw[-,thick] (-1,-1) -- (-1,0)  {};
\draw[-,thick] (1,0) -- (1,1) {};

\node [circle,fill,inner sep=1.5pt] at (-2,-1) {};
\node [circle,fill,inner sep=1.5pt] at (-1,-1) {};
\node [circle,fill,inner sep=1.5pt] at (0,-1) {};
\node [circle,fill,inner sep=1.5pt] at (-1,0) {};
\node [circle,fill,inner sep=1.5pt] at (1,0) {};
\node [circle,fill,inner sep=1.5pt] at (0,1) {};
\node [circle,fill,inner sep=1.5pt] at (1,1) {};
\node [circle,fill,inner sep=1.5pt] at (2,1) {};

\end{tikzpicture}
\begin{tikzpicture}[scale=1]


\draw[-,dashed] (-2,-1) -- (-1,-1) node[pos=0.5,below] {$A_{i-1}$};
\draw[-,thick] (-1,-1) -- (0,-1) node[pos=0.5,below] {$B_{i+1}$};
\draw[-,dashed] (-1,0) -- (0,0) {};
\draw[-,dashed] (0,0) -- (1,0) {};
\draw[-,thick] (0,1) -- (1,1) node[pos=0.5,above] {$A_i$};
\draw[-,dashed] (1,1) -- (2,1) node[pos=0.5,above] {$B_i$};

\draw[-,dashed] (-2,-1) -- (-1,0) {};
\draw[-,thick] (-1,0) -- (0,1) {};
\draw[-,dashed] (-1,-1) -- (0,0) {};
\draw[-,dashed] (0,0) -- (1,1) {};
\draw[-,thick] (0,-1) -- (1,0) node[pos=0.5,right] {$D_i$};
\draw[-,dashed] (1,0) -- (2,1) node[pos=0.5,right] {$C_i$};

\draw[-,thick] (-1,-1) -- (-1,0)  {};
\draw[-,thick] (1,0) -- (1,1) {};

\node [circle,fill,inner sep=1.5pt] at (-2,-1) {};
\node [circle,fill,inner sep=1.5pt] at (-1,-1) {};
\node [circle,fill,inner sep=1.5pt] at (0,-1) {};
\node [circle,fill,inner sep=1.5pt] at (-1,0) {};
\node [circle,fill,inner sep=1.5pt] at (1,0) {};
\node [circle,fill,inner sep=1.5pt] at (0,1) {};
\node [circle,fill,inner sep=1.5pt] at (1,1) {};
\node [circle,fill,inner sep=1.5pt] at (2,1) {};

\end{tikzpicture}
\begin{tikzpicture}[scale=1]


\draw[-,dashed] (-2,-1) -- (-1,-1) node[pos=0.5,below] {$A_i$};
\draw[-,thick] (-1,-1) -- (0,-1) node[pos=0.5,below] {$B_{i+2}$};
\draw[-,dashed] (-1,0) -- (0,0) {};
\draw[-,dashed] (0,0) -- (1,0) {};
\draw[-,thick] (0,1) -- (1,1) node[pos=0.5,above] {$A_{i+1}$};
\draw[-,dashed] (1,1) -- (2,1) node[pos=0.5,above] {$B_{i+1}$};

\draw[-,dashed] (-2,-1) -- (-1,0)  {};
\draw[-,thick] (-1,0) -- (0,1) node[pos=0.5,left] {$C_i$};
\draw[-,dashed] (-1,-1) -- (0,0) {};
\draw[-,dashed] (0,0) -- (1,1) {};
\draw[-,thick] (0,-1) -- (1,0) {};
\draw[-,dashed] (1,0) -- (2,1) {};

\draw[-,thick] (-1,-1) -- (-1,0)  {};
\draw[-,thick] (1,0) -- (1,1) {};

\node [circle,fill,inner sep=1.5pt] at (-2,-1) {};
\node [circle,fill,inner sep=1.5pt] at (-1,-1) {};
\node [circle,fill,inner sep=1.5pt] at (0,-1) {};
\node [circle,fill,inner sep=1.5pt] at (-1,0) {};
\node [circle,fill,inner sep=1.5pt] at (1,0) {};
\node [circle,fill,inner sep=1.5pt] at (0,1) {};
\node [circle,fill,inner sep=1.5pt] at (1,1) {};
\node [circle,fill,inner sep=1.5pt] at (2,1) {};

\end{tikzpicture}


\begin{tikzpicture}[scale=1]


\draw[-,thick] (-1,-1) -- (0,-1) node[pos=0.5,below] {$B_i$};
\draw[-,thick] (0,-1) -- (1,-1) node[pos=0.5,below] {$A_{i-3}$};
\draw[-,dashed] (-1,0) -- (0,0) {};
\draw[-,dashed] (0,0) -- (1,0) {};
\draw[-,thick] (-1,1) -- (0,1) node[pos=0.5,above] {$B_{i-2}$};
\draw[-,thick] (0,1) -- (1,1) node[pos=0.5,above] {$A_{i-1}$};

\draw[-,thick] (-1,-1) -- (-1,0) node[pos=0.5,left] {$\widetilde{D}_i$};
\draw[-,thick] (-1,0) -- (-1,1) {};
\draw[-,dashed] (0,-1) -- (0,0) {};
\draw[-,dashed] (0,0) -- (0,1) {};
\draw[-,thick] (1,-1) -- (1,0) {};
\draw[-,thick] (1,0) -- (1,1) {};

\draw[-,dashed] (-1,0) -- (0,1)  {};
\draw[-,dashed] (0,-1) -- (1,0) {};

\node [circle,fill,inner sep=1.5pt] at (-1,-1) {};
\node [circle,fill,inner sep=1.5pt] at (0,-1) {};
\node [circle,fill,inner sep=1.5pt] at (1,-1) {};
\node [circle,fill,inner sep=1.5pt] at (-1,0) {};
\node [circle,fill,inner sep=1.5pt] at (1,0) {};
\node [circle,fill,inner sep=1.5pt] at (-1,1) {};
\node [circle,fill,inner sep=1.5pt] at (0,1) {};
\node [circle,fill,inner sep=1.5pt] at (1,1) {};

\end{tikzpicture}
\hspace{2.5 cm}
\begin{tikzpicture}[scale=1]


\draw[-,thick] (-1,-1) -- (0,-1) node[pos=0.5,below] {$B_{i+1}$};
\draw[-,thick] (0,-1) -- (1,-1) node[pos=0.5,below] {$A_{i-2}$};
\draw[-,dashed] (-1,0) -- (0,0) {};
\draw[-,dashed] (0,0) -- (1,0) {};
\draw[-,thick] (-1,1) -- (0,1) node[pos=0.5,above] {$B_{i-1}$};
\draw[-,thick] (0,1) -- (1,1) node[pos=0.5,above] {$A_i$};

\draw[-,thick] (-1,-1) -- (-1,0) {};
\draw[-,thick] (-1,0) -- (-1,1) {};
\draw[-,dashed] (0,-1) -- (0,0) {};
\draw[-,dashed] (0,0) -- (0,1) {};
\draw[-,thick] (1,-1) -- (1,0) node[pos=0.5,right] {$\widetilde{D}_i$};
\draw[-,thick] (1,0) -- (1,1) node[pos=0.5,right] {$\widetilde{C}_i$};

\draw[-,dashed] (-1,0) -- (0,1)  {};
\draw[-,dashed] (0,-1) -- (1,0) {};

\node [circle,fill,inner sep=1.5pt] at (-1,-1) {};
\node [circle,fill,inner sep=1.5pt] at (0,-1) {};
\node [circle,fill,inner sep=1.5pt] at (1,-1) {};
\node [circle,fill,inner sep=1.5pt] at (-1,0) {};
\node [circle,fill,inner sep=1.5pt] at (1,0) {};
\node [circle,fill,inner sep=1.5pt] at (-1,1) {};
\node [circle,fill,inner sep=1.5pt] at (0,1) {};
\node [circle,fill,inner sep=1.5pt] at (1,1) {};

\end{tikzpicture}
\hspace{1.5 cm}
\begin{tikzpicture}[scale=1]


\draw[-,thick] (-1,-1) -- (0,-1) node[pos=0.5,below] {$B_{i+2}$};
\draw[-,thick] (0,-1) -- (1,-1) node[pos=0.5,below] {$A_{i-1}$};
\draw[-,dashed] (-1,0) -- (0,0) {};
\draw[-,dashed] (0,0) -- (1,0) {};
\draw[-,thick] (-1,1) -- (0,1) node[pos=0.5,above] {$B_i$};
\draw[-,thick] (0,1) -- (1,1) node[pos=0.5,above] {$A_{i+1}$};

\draw[-,thick] (-1,-1) -- (-1,0) {};
\draw[-,thick] (-1,0) -- (-1,1) node[pos=0.5,left] {$\widetilde{C}_i$};
\draw[-,dashed] (0,-1) -- (0,0) {};
\draw[-,dashed] (0,0) -- (0,1) {};
\draw[-,thick] (1,-1) -- (1,0) {};
\draw[-,thick] (1,0) -- (1,1) {};

\draw[-,dashed] (-1,0) -- (0,1)  {};
\draw[-,dashed] (0,-1) -- (1,0) {};

\node [circle,fill,inner sep=1.5pt] at (-1,-1) {};
\node [circle,fill,inner sep=1.5pt] at (0,-1) {};
\node [circle,fill,inner sep=1.5pt] at (1,-1) {};
\node [circle,fill,inner sep=1.5pt] at (-1,0) {};
\node [circle,fill,inner sep=1.5pt] at (1,0) {};
\node [circle,fill,inner sep=1.5pt] at (-1,1) {};
\node [circle,fill,inner sep=1.5pt] at (0,1) {};
\node [circle,fill,inner sep=1.5pt] at (1,1) {};


\end{tikzpicture}

\end{center}
\caption{Cut the dotted triangles in the above line and paste them along the sides $C_i,D_i$ for $i=0,1,2$, as in the line below. It follows $T\cdot X_\cO=X_\cO$.}
\label{FigureOrnithorynqueActionT}
\end{figure}

\begin{proposition}
\label{PropositionOrbitOrnithorynque}
We have $A\cdot X_\cO=X_\cO$ for any $A\in\sltwoz$.
\end{proposition}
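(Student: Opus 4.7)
The plan is to reduce to the two generators of $\sltwoz$. Since $\sltwoz = \langle T, V \rangle$ (a standard fact, e.g.\ by Smith normal form combined with the relation $(VT^{-1})^3 = -\id$) and $A \mapsto A \cdot X$ is a group action on origamis, it suffices to establish the two identities $T \cdot X_\cO = X_\cO$ and $V \cdot X_\cO = X_\cO$.

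The first identity is precisely what Figure~\ref{FigureOrnithorynqueActionT} depicts. The origami $T \cdot X_\cO$ is built out of the twelve parallelograms $P_j := T([0,1]^2) \times \{j\}$, glued by the combinatorics $(h,v)$. Cutting the ``overhanging'' triangle of each parallelogram and pasting it onto a suitable neighbour using the existing side identifications, as in the figure, turns each parallelogram into a unit square. By the formulas $h^{(T)} = h$, $v^{(T)} = v \circ h^{-1}$ recalled in \S~\ref{SectionActionSL(2,Z)}, the resulting unit-square decomposition is governed by the pair $(h, v h^{-1})$. It therefore suffices to exhibit a bijection $\sigma : \cQ \to \cQ$ satisfying
$$
\sigma h = h \sigma
\quad\text{ and }\quad
\sigma v = v h \sigma,
$$
which amounts to conjugating $(h, v h^{-1})$ to $(h, v)$. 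Such a $\sigma$ is precisely the relabelling of squares visible in the lower row of Figure~\ref{FigureOrnithorynqueActionT}; the two displayed identities can then be checked element by element against the formulas defining $h$ and $v$ in \S~\ref{SectionOrnithorynqueOrigami}.

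For $V \cdot X_\cO = X_\cO$, the shortest route is via symmetry. The Ornithorynque carries an obvious $90^\circ$ rotational automorphism --- already suggested by the symmetric arrangement in Figure~\ref{FigureOrnithorynque} --- which realises $R \cdot X_\cO = X_\cO$. A direct matrix calculation gives $R T R^{-1} = V^{-1}$, hence $V = R T^{-1} R^{-1}$, and the conclusion follows by combining this relation with the preceding paragraph and the group-action property. If one prefers a self-contained argument avoiding the rotational symmetry, one can instead repeat the cut-and-paste procedure on the lower-triangular parallelograms $V([0,1]^2) \times \{j\}$ and exhibit the analogous permutation $\sigma'$ of $\cQ$ conjugating $(h^{(V)}, v^{(V)}) = (h \circ v^{-1}, v)$ to $(h, v)$.

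I expect the main difficulty to lie in the combinatorial bookkeeping needed to write down $\sigma$ (and, if one takes the second route, also $\sigma'$) and to verify the resulting conjugacy identities: with $|\cQ| = 12$ squares and a three-level indexing by $\ZZ/3\ZZ \times \ZZ/2\ZZ \times \ZZ/2\ZZ$, the case check is long, but entirely mechanical once the relabelling visible in Figure~\ref{FigureOrnithorynqueActionT} has been extracted from the picture.
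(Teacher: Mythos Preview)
Your proposal is correct and follows essentially the same route as the paper. The paper uses the generators $T,R$ directly: it reads $T\cdot X_\cO=X_\cO$ off Figure~\ref{FigureOrnithorynqueActionT} and $R\cdot X_\cO=X_\cO$ off the visible symmetry of Figure~\ref{FigureOrnithorynque}, then concludes since $\langle T,R\rangle=\sltwoz$. You start with the generating pair $T,V$, but for $V$ you invoke the same rotational symmetry $R\cdot X_\cO=X_\cO$ together with $V=RT^{-1}R^{-1}$, which is logically equivalent. Your added discussion of the relabelling $\sigma$ makes explicit what the paper leaves implicit in the cut-and-paste picture; the paper does not write $\sigma$ down and simply refers to the figure and to~\cite{ForniMatheusIntroduction}.
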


\begin{proof}
Recall that $T,R$ in Equation~\eqref{EquationGeneratorsSL(2,Z)} generate $\sltwoz$.  Figure~\ref{FigureOrnithorynqueActionT} shows that $T\cdot X_\cO=X_\cO$, while it is clear from 
Figure~\ref{FigureOrnithorynque} that $R\cdot X_\cO=X_\cO$. See~\cite{ForniMatheusIntroduction} for more details.
\end{proof}

\subsection{Affine homeomorphisms}
\label{SectionAffineHomeomorphisms}

Fix an origami $X$ and $A\in\sltwoz$. 
For $j\in\cQ$, the affine maps $(j,x)\mapsto \big(j,A(x)\big)$ of $Q_j$ onto $P_j$ agree on glued sides, where we use the same notation of \S~\ref{SectionActionSL(2,Z)}. Therefore we have a globally defined homeomorphism 
\begin{equation}
\label{EquationAffineHomeomorphisms}
\psi_A:X\to A\cdot X
\end{equation}
sending $\{p_1,\dots,p_m\}$ bijectively onto the set of conical points of $A\cdot X$. 
Local inverses $\varphi:U\to X\setminus\{p_1,\dots,p_m\}$ of the covering $\rho_X$ in Equation~\eqref{EquationCoveringOverTorus}, defined over simply connected open sets $U\subset\TT^2$, give smooth charts for $X\setminus\{p_1,\dots,p_m\}$. Change of charts are indeed translations 
\footnote{Thus they are holomorphic, and one can extend them to an holomorphic atlas over the entire $X$, see~\cite{ForniMatheusIntroduction}.}. 
Similar translation charts exist on $A\cdot X$ (minus its conical points). In these translation charts $\psi_A$ is a diffeomorphism, 
which is locally affine. The \emph{linear part} $D\psi_A$ is the linear part of $\psi_A$ computed in translation charts. We have of course $D\psi_A=A$. The \emph{automorphisms group} $\autgroup(X)$ is the set of orientation preserving homeomorphisms $\psi:X\to X$ which preserve $\{p_1,\dots,p_m\}$ and are affine in translation charts, with $D\phi=\id$. In general $\autgroup(X)$ is non trivial, thus for a given $A\in\sltwoz$ there exist more than one $\psi_A$ as in Equation~\eqref{EquationAffineHomeomorphisms}. We have $\autgroup(X_\cO)\simeq \ZZ/3\ZZ$,
which acts by translation on the big $2\times 2$ squares in Figure~\ref{FigureOrnithorynque} (see \S~3.1 in~\cite{MatheusYoccoz}).

\section{The intersection property of $X_\cO$}
\label{SectionIntersectionPropertyOrnithorynque}

Let $X$ be any origami and $p_1,\dots,p_m$ be its conical points. Let $\rho_X:X\to\TT^2$ be the covering in Equation~\eqref{EquationCoveringOverTorus}. A \emph{straight segment} in $X$, or simply \emph{segment}, is a smooth path 
$S:(a,b)\to X\setminus\{p_1,\dots,p_m\}$ such that there exists a vector $v\in\RR^2$ with 
$$
\frac{d}{dt}\rho_X\big(S(t)\big)=v
\quad\text{ for any }\quad t\in(a,b).
$$
If $v=(x,y)\in\RR^2$, then the slope $\slope(S)\in\RR\cup\{\pm\infty\}$ of such $S$ is
$$
\slope(S):=\frac{x}{y}.
$$
The length $|S|$ of such segment is $|S|:=|b-a|\cdot\|v\|$, where $\|\cdot\|$ is the euclidean norm in 
$\RR^2$. Observe that segments do not contain conical points in their interior. Endpoints of straight segments can be conical points. A \emph{saddle connection} of the surface $X$ is a straight segment connecting conical points. Proposition~\ref{PropositionMainIntersectionProperty} is the main result in this section. Its proof is resumed in \S~\ref{SectionEndProofMainProposition} below, applying the constructions developed in \S~\ref{SectionPreliminaryIntersectionLemmas} and 
\S~\ref{SectionCuttingSequences}.

\begin{proposition}
\label{PropositionMainIntersectionProperty}
Let $X_\cO$ be the Ornithorynque origami. Fix segments $H,V$ in $X_\cO$ with $0<\slope(V)<1$ and $\slope(H)<-1$. If both segments have length $|H|,|V|\geq\sqrt{288}$ then 
$$
H\cap V\not=\emptyset.
$$
\end{proposition}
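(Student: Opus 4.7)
The plan is to exploit the cylinder decompositions of $X_\cO$ in the horizontal and vertical directions and reduce the claim to a combinatorial statement about systems of parallel chords.

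First, I would analyze the cylinder structure explicitly. Computing the cycle decomposition of the permutation $h$ on the twelve labels in $\cQ$ shows that $h$ splits into two $6$-cycles, giving a horizontal cylinder decomposition $X_\cO=C_1^h\cup C_2^h$ into two cylinders, each of circumference $6$ and height $1$. The $R$-invariance of $X_\cO$ (Proposition~\ref{PropositionOrbitOrnithorynque}) forces the same picture in the vertical direction, yielding $C_1^v,C_2^v$ of identical dimensions. The assumption $|V|\geq\sqrt{288}=12\sqrt 2$ together with $\slope(V)\in(0,1)$ gives $|\Delta y(V)|\geq 12$, so $V$ crosses horizontal cylinder boundaries at least $12$ times and decomposes into $\geq 12$ parallel chords, each going from the bottom to the top of some $C_i^h$ and of horizontal width $\slope(V)<1$. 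The same analysis applied to $H$ with the $R$-rotated picture yields $|\Delta x(H)|\geq 12$ and $\geq 12$ parallel chords of $H$ in the vertical cylinders, each of vertical width $1/|\slope(H)|<1$. This chord decomposition, together with the interval-exchange-like rule governing how $V$ transits from one horizontal chord to the next (determined by the permutation $v$ on boundary segments), is exactly what cutting sequences record in \S\ref{SectionIntersectionPropertyOrnithorynque}.

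Second, the preliminary intersection lemmas provide the key local ingredient: within a single unit square of $X_\cO$ that is traversed by both a $V$-chord (entering through the bottom or left and exiting through the top or right, since $\slope(V)\in(0,1)$) and an $H$-chord (entering through the top or left and exiting through the bottom or right, since $\slope(H)<-1$), a careful case analysis on entry and exit sides identifies exactly which configurations of chord positions force a crossing. One would then promote this local statement to a statement about whole chords in a common cylinder: if enough parallel $V$-chords sit in a cylinder and an $H$-chord enters it, a crossing is forced by the horizontal-width budget, because the $V$-chords together span a horizontal length larger than $6$ (one full circumference) as soon as more than $6/\slope(V)$ of them are present, hence cover every horizontal position in the cylinder.

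Third, to conclude one checks that the $12$-chord bounds on each side, combined with the cylinder gluing patterns of $h,v$ and the $\ZZ/3\ZZ$ automorphism from \S\ref{SectionAffineHomeomorphisms}, guarantee a common unit square traversed by one $V$-chord and one $H$-chord in an intersecting configuration. The $\ZZ/3\ZZ$ symmetry cyclically permutes the three $2\times 2$ blocks of Figure~\ref{FigureOrnithorynque} and cuts the case analysis into equivalent pieces, so only one block needs to be checked in detail. The constant $\sqrt{288}$ is calibrated so that both chord counts reach the threshold $12$ uniformly in the slopes $\slope(V)\in(0,1)$, $\slope(H)<-1$.

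The main obstacle will be this last combinatorial step: one must verify, uniformly in the two slopes and in the starting positions of $V$ and $H$, that $12$ chords of each type really cannot all be arranged to avoid each other. Unlike the Wollmilchsau case treated in \S~8.2 of~\cite{KimMarcheseMarmi}, where fewer squares and a larger automorphism group make avoidance easy to rule out by symmetry, the Ornithorynque requires an explicit bookkeeping on the labels $A_i,B_i,C_i,D_i$ of Figure~\ref{FigureOrnithorynque} to exclude pathological parallel-chord configurations; this is where the bound $12\sqrt 2$ is used in its full sharpness.
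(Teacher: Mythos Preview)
Your plan has the right philosophy—reduce to a combinatorial case analysis governed by cutting sequences and exploit the $\ZZ/3\ZZ$ symmetry—but the concrete mechanism you propose does not close. The horizontal-coverage argument in your second paragraph requires more than $6/\slope(V)$ chords of $V$ in a single horizontal cylinder to span a full circumference; since $\slope(V)$ may be arbitrarily close to $0$, no fixed chord count (in particular $12$) achieves this, so the claim that ``the constant $\sqrt{288}$ is calibrated so that both chord counts reach the threshold $12$ uniformly in the slopes'' contradicts the paragraph just above it. Deferring the remainder to ``explicit bookkeeping on the labels $A_i,B_i,C_i,D_i$'' leaves the essential difficulty untouched: with only twelve chords, $V$ may cover an arbitrarily small fraction of the horizontal circumference, so one cannot hit $H$ by a density argument alone.

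The paper's proof is organized around the three $2\times2$ tiles $\cT_i$ rather than the two $6\times1$ cylinders, and the key structural idea you are missing is a \emph{dichotomy on the cutting sequence of $H$}, which correlates with its slope. First one shows (Lemma~\ref{LemmaVerticalCrossesAllTiles}) that a cutting sequence of length $\ge 6$ already forces $V$ to meet every tile $\cT_i$—this replaces your coverage bound and costs nothing in $\slope(V)$. Then either $[H]$ contains a triple $(C_{i+2},C_i,C_{i+1})$ or $(D_i,D_{i+2},D_{i+1})$, which pins down enough of $H$ inside two adjacent tiles to force a crossing with $V$ (Lemma~\ref{LemmaIntersectionOneHorizontalCylinder}); or $[H]$ contains no such triple, which \emph{forces} $-6<\slope(H)<-1$, and then a length-$12$ cutting sequence for $H$ must contain a pair $\gamma_k\in\{C_{i+2},A_i\}$, $\gamma_{k+1}\in\{B_{i+1},D_i\}$, handled by Lemma~\ref{LemmaIntersectionTwoHorizontalCylinders}. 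This split on $[H]$ is precisely what makes a fixed length bound work uniformly in both slopes; without an analogous mechanism, your route leads to a threshold that blows up as $\slope(V)\to 0$ or $\slope(H)\to -\infty$.
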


Let $S:\RR^2\to\RR^2$ acting by $S(x,y):=(-x,y)$. The same construction as in 
\S~\ref{SectionActionSL(2,Z)} gives a surface $S\cdot X_\cO$, obtained glueing copies 
$
\{j\}\times S\big([0,1]^2\big)
$ 
of the reflected square $S\big([0,1]^2\big)$, where $j\in\cQ$ and where identifications in 
$S\cdot X_\cO$ are induced by identifications in $X_\cO$. It is easy to see that indeed we have 
$S\cdot X_\cO=X_\cO$. As in \S~\ref{SectionAffineHomeomorphisms}, there exists an orientation reversing homeomorphism $f_S:X_\cO\to X_\cO$ with linear part $Df_S=S$. If $H,V$ are segments in $X_\cO$ with $-1<\slope(H)<0$ and $\slope(V)>1$, then $0<\slope\big(f_S(H)\big)<1$ and $\slope\big(f_S(V)\big)<-1$. 
Proposition~\ref{PropositionMainIntersectionProperty} implies directly the next Corollary.

\begin{corollary}
\label{CorollaryMainIntersectionProperty}
Let $X_\cO$ be the Ornithorynque origami. Fix segments $H,V$ in $X_\cO$ with $-1<\slope(H)<0$ and $\slope(V)>1$. If both segments have length $|H|,|V|\geq\sqrt{288}$ then 
$$
H\cap V\not=\emptyset.
$$
\end{corollary}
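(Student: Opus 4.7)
The plan is to deduce this corollary from Proposition~\ref{PropositionMainIntersectionProperty} using the reflection symmetry $f_S:X_\cO\to X_\cO$ introduced in the paragraph preceding the statement, which has linear part $S(x,y)=(-x,y)$. First I would note that $f_S$, being locally affine in translation charts with linear part the Euclidean isometry $S$, is a global isometry on $X_\cO\setminus\{p_1,\dots,p_m\}$, and in particular it preserves lengths of straight segments. Hence $|f_S(H)|=|H|$ and $|f_S(V)|=|V|$, both still at least $\sqrt{288}$, and straight segments are sent to straight segments.

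Next I would compute how $f_S$ transforms slopes. A segment with direction vector $(x,y)$ is pushed forward by $f_S$ to a segment with direction $S(x,y)=(-x,y)$, so in terms of the convention $\slope=x/y$ the slope is replaced by its opposite. Applying this to the hypothesis $-1<\slope(H)<0$ and $\slope(V)>1$, I obtain
$$
0<\slope(f_S(H))<1
\quad\text{and}\quad
\slope(f_S(V))<-1,
$$
which is precisely the slope configuration required by Proposition~\ref{PropositionMainIntersectionProperty}, with $f_S(V)$ playing the role of $H$ (slope below $-1$) and $f_S(H)$ playing the role of $V$ (slope between $0$ and $1$).

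Finally I would invoke Proposition~\ref{PropositionMainIntersectionProperty} to conclude that $f_S(H)\cap f_S(V)\neq\emptyset$, and pull back by the bijection $f_S$: since $f_S(H\cap V)=f_S(H)\cap f_S(V)$, it follows that $H\cap V\neq\emptyset$. I do not anticipate any obstacle in this argument: all the substance of the corollary is contained in Proposition~\ref{PropositionMainIntersectionProperty}, and the role of this step is merely to record the horizontal reflection symmetry of $X_\cO$, which is visible on Figure~\ref{FigureOrnithorynque} and was already spelled out in the paragraph introducing $f_S$.
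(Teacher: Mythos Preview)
Your proposal is correct and follows exactly the approach the paper takes: apply the reflection $f_S$ with linear part $S(x,y)=(-x,y)$, observe that slopes are negated and lengths preserved so that Proposition~\ref{PropositionMainIntersectionProperty} applies to $f_S(H),f_S(V)$, and pull back the intersection. You have simply made explicit the length-preservation and bijectivity steps that the paper leaves implicit.
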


\subsection{Preliminary Lemmas}
\label{SectionPreliminaryIntersectionLemmas}

The proof of Lemma~\ref{LemmaIntersectionLines} below is left to the reader.

\begin{lemma}
\label{LemmaIntersectionLines}
Let $Q_1:=[0,1]^2$ and $Q_2:=[1,2]\times[0,1]$. Let $\ell_H$ and $\ell_V$ be two lines with 
$0<\slope(\ell_V)<1$ and $\slope(\ell_H)<-1$ and set $P:=\ell_H\cap\ell_V$. If both $\ell_V$ and $\ell_H$ intersect $\{1\}\times[0,1]$, then either $P\in Q_1$ or $P\in Q_2$.
\end{lemma}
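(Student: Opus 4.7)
The plan is to parameterize both lines in the variable $y$, compute $P=\ell_H\cap\ell_V$ explicitly, and check that each coordinate of $P$ lies in $[0,2]\times[0,1]=Q_1\cup Q_2$. Recall that in the paper's convention $\slope=x/y$, so the assumptions $s_V:=\slope(\ell_V)\in(0,1)$ and $s_H:=\slope(\ell_H)\in(-\infty,-1)$ mean that $\ell_V$ is ``near-vertical'' (it moves slightly right as $y$ increases) while $\ell_H$ is ``more horizontal than the diagonal'' and moves left as $y$ grows. Let $a,b\in[0,1]$ denote the heights at which $\ell_V$ and $\ell_H$ meet $\{1\}\times[0,1]$; then
\[
\ell_V=\{(1+s_V(y-a),y):y\in\RR\},\qquad \ell_H=\{(1+s_H(y-b),y):y\in\RR\}.
\]

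Next, I would equate the two $x$-coordinates, obtaining a linear equation in $y$ whose solution gives
\[
y_P=\frac{s_V a-s_H b}{s_V-s_H},\qquad x_P-1=\frac{s_V s_H(a-b)}{s_V-s_H}.
\]
The containment $y_P\in[0,1]$ is then immediate from the sign structure: $s_V-s_H>0$ because $s_V>0>s_H$, while $s_V a\ge 0$ and $-s_H b\ge 0$, and their sum is at most $s_V+|s_H|=s_V-s_H$.

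Finally, for the $x$-coordinate I would use $|a-b|\le 1$ together with $s_V s_H<0$ to bound
\[
|x_P-1|\le\frac{s_V\,|s_H|}{s_V+|s_H|}=\frac{1}{1/s_V+1/|s_H|}<1,
\]
the strict inequality being forced by $s_V<1$, hence $1/s_V>1$. This yields $x_P\in(0,2)$, and combined with $y_P\in[0,1]$ gives $P\in Q_1\cup Q_2$. No step is genuinely hard; the only place where one can slip up is the sign bookkeeping arising from the non-standard slope convention $\slope=x/y$, which is why I would pin down the parameterization in $y$ at the very beginning.
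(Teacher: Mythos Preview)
Your argument is correct: the parametrization in $y$ is the right choice given the convention $\slope=x/y$, and both coordinate bounds follow exactly as you wrote. The paper leaves this lemma to the reader, so there is no proof to compare against; your explicit computation is a clean and complete verification.
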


\begin{lemma}
\label{LemmaSegmentsSameSquareIntersect}
Let $X$ be any origami labelled by a finite set $\cQ$. Fix a square $Q_j$ with $j\in\cQ$ and let $H$ and $V$ be segments in $X$ with $0<\slope(V)<1$ and $\slope(H)<-1$, such that both $H,V$ have endpoints in 
$
\bigcup_{l\not=j}\partial Q_l
$. 
If both $H\cap Q_k\not=\emptyset$ and $V\cap Q_k\not=\emptyset$ then $H\cap V\not=\emptyset$.
\end{lemma}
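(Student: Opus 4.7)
The plan is a case analysis on how the maximal sub-segments of $H\cap Q_k$ and $V\cap Q_k$ cross $\partial Q_k$, reduced at the end to Lemma~\ref{LemmaIntersectionLines}. Throughout I view $Q_k$ in its translation chart as a copy of $[0,1]^2$.

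First I would pick a connected component $H_0\subseteq H\cap Q_k$. The endpoint hypothesis places the two endpoints of $H$ on the $1$-skeleton of $X$ and, in particular, outside the interior of $Q_k$; hence both endpoints of $H_0$ lie on $\partial Q_k$. Similarly pick $V_0\subseteq V\cap Q_k$. The assumption $\slope(H)<-1$ means that $H_0$ satisfies $|dy/dx|<1$, which forbids it from joining two points on the same horizontal edge or the top edge to the bottom edge; the pair of edges containing its endpoints is therefore one of (left, right), (top, right), or (left, bottom). Symmetrically $V_0$ realizes one of (bottom, top), (left, top), or (bottom, right).

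Of the nine resulting configurations, only the pair of types (left, right) and (bottom, top) does not share an edge of $\partial Q_k$: in that case the bottom endpoint of $V_0$ lies weakly below $H_0$ and the top endpoint weakly above, and an intermediate value argument produces a point of $H_0\cap V_0$ inside $Q_k$. In each of the eight remaining configurations $H_0$ and $V_0$ share exactly one edge $E$ of $\partial Q_k$. After rotating or reflecting coordinates so that $E$ plays the role of the shared edge $\{1\}\times[0,1]$ of Lemma~\ref{LemmaIntersectionLines}, that Lemma places the intersection $P$ of the two straight lines through $H_0$ and $V_0$ either inside $Q_k$ or inside the unit square $Q_E$ glued to $Q_k$ across $E$.

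If $P\in Q_k$ then $P\in H_0\cap V_0\subseteq H\cap V$. If $P\in Q_E$, the endpoint hypothesis is used to conclude that $H$ and $V$ cross $E$ and extend into $Q_E$; the quantitative estimate behind Lemma~\ref{LemmaIntersectionLines}, namely that the two lines close their vertical gap at rate strictly greater than $1$ per unit horizontal displacement, then shows that $P$ is reached within the first traversal of $Q_E$ by those extensions, so $P\in H\cap V$. The main obstacle is this last step: in each of the eight shared-edge configurations one must check that the portions of $H$ and $V$ forced to live in $Q_E$ by the endpoint hypothesis are long enough to contain $P$, and it is exactly here that the slope conditions $\slope(H)<-1$ and $0<\slope(V)<1$ are essential, in the same way as in Lemma~\ref{LemmaIntersectionLines}.
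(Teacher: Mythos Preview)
Your argument is essentially the paper's: both reduce to Lemma~\ref{LemmaIntersectionLines} applied at a side of $Q_k$ touched by both chords, and then use that $H$ and $V$ continue into the neighbouring square as full chords when the intersection $P$ falls there. The paper compresses your nine-case analysis into one line via the rotation $R$ (note that reflections do \emph{not} preserve the slope hypotheses of Lemma~\ref{LemmaIntersectionLines}, so only powers of $R$ work, with the roles of $\ell_H$ and $\ell_V$ swapping); your version is in fact slightly more careful in isolating the (left,\,right)$+$(bottom,\,top) configuration, where no side is shared and the paper's ``there must be a side of $Q_j$ intersecting both'' is literally false, though the conclusion is immediate in that case.
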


\begin{proof}
Let $\ell_H,\ell_V$ be lines as in Lemma~\ref{LemmaIntersectionLines} and $R$ be the matrix in 
Equation~\eqref{EquationGeneratorsSL(2,Z)}. The lines $R(\ell_H),R(\ell_V)$ satisfy the same assumption of Lemma~\ref{LemmaIntersectionLines}, with inverted roles. 
Thus Lemma~\ref{LemmaIntersectionLines} holds replacing $Q_2$ by any of the four unitary squares sharing a side with $Q_1$. Such extended version of Lemma~\ref{LemmaIntersectionLines} implies the statement, observing that if $H,V$ are segments as in the statement, then there must be a side of $Q_j$ intersecting both $H$ and $V$. 
\end{proof}

\subsection{Cutting sequences}
\label{SectionCuttingSequences}

Recall Figure~\ref{FigureOrnithorynque} and consider the twelve letters alphabet 
$$
\cA:=\{A_i,B_i,C_i,D_i:i=0,1,2\}.
$$ 
Geometrically, any element $\gamma\in\cA$ is a saddle connection of $X_\cO$. Symbolically, elements 
$\gamma\in\cA$ are letters which compose words $(\gamma_1,\dots,\gamma_n)$. Such words arise as \emph{cutting sequences} of straight segments $S$ in $X$. Fix a straight segment $S\subset X$, and abusing the notation denote $S:(0,1)\to X$ its parametrization with constant speed. Define recursively integers $k=1,\dots,n$ and instants $0\leq t_1<\dots<t_n\leq1$ by 
$$
\begin{array}{l}
t_1:=\min\{t\geq 0:\exists \gamma\in\cA:S(t)\in \gamma\}
\\
t_k:=\min\{t>t_{k-1}:\exists \gamma\in\cA:S(t)\in \gamma\}
\text{ for }k\geq2,
\end{array}
$$
where 
$
t_n=\max\{0\leq t\leq 1:\exists \gamma\in\cA:S(t)\in \gamma\}
$. 
Then define the cutting sequence 
$$
[S]:=(\gamma_1,\dots,\gamma_n)
$$
of $S$ as the word in the letters of $\cA$ such that 
$$
S(t_k)\in \gamma_k
\quad\text{ for }\quad
k=1,\dots,n.
$$

In the notation of \S~\ref{SectionOrnithorynqueOrigami}, for $i=0,1,2$ define the \emph{tile} 
$\cT_i\subset X$ by 
$$
\cT_i:=Q_{(i,1,0)}\cup Q_{(i,0,0)}\cup Q_{(i,1,1)}\cup Q_{(i,0,1)}.
$$

\begin{lemma}
\label{LemmaVerticalCrossesAllTiles}
Let $V$ be a segment with $0<\slope(V)<1$ and assume that its cutting sequence 
$[V]=(\gamma_1,\dots,\gamma_n)$ contains $n\geq6$ letters. 
Then $V\cap\cT_i\not=\emptyset$ for $i=0,1,2$.
\end{lemma}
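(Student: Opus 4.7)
The plan is to recast the conclusion as exhaustivity of a walk on $\ZZ/3\ZZ$, and to derive exhaustivity from local constraints on consecutive letters of the cutting word.

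First, inspecting the permutations $(h,v)$ of \S~\ref{SectionOrnithorynqueOrigami} tile by tile, I verify that crossing a letter of type $A$ or $C$ in the sense of motion of $V$ (up and to the right, since $0<\slope(V)<1$) takes the segment from $\cT_i$ into $\cT_{i+1}$, while a letter of type $B$ or $D$ takes it from $\cT_i$ into $\cT_{i-1}$, indices mod $3$. Let $\delta_k$ equal $+1$ if $\gamma_k$ has type $A$ or $C$ and $-1$ otherwise. Then the sequence of tiles visited by $V$ is the walk $i_0,i_0+\delta_1,\dots,i_0+\sum_{k\le n}\delta_k$ on $\ZZ/3\ZZ$, and $V\cap\cT_i\neq\emptyset$ for $i=0,1,2$ is equivalent to this walk hitting all three residues. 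The latter fails exactly when the walk stays inside a two-element subset of $\ZZ/3\ZZ$, equivalently when $(\delta_k)$ strictly alternates in sign.

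Next, I read off a local rule telling which letter type $\gamma_{k+1}$ may follow $\gamma_k$. For motion up and to the right, letters of type $A$, $B$, $C$, $D$ enter the next tile via the bottom of its bottom-left, the bottom of its bottom-right, the left of its top-left, the left of its bottom-left square respectively; and exit a tile via the top of its top-left, the top of its top-right, the right of its top-right, the right of its bottom-right square. A straight segment of slope in $(0,1)$ joining such an entry to such an exit across a single $2\times 2$ tile must satisfy $\Delta x,\Delta y>0$ and $\Delta x<\Delta y$; checking these linear inequalities for each of the $16$ (entry, exit) pairs yields: after a letter of type $A$ the next type is $A$, $B$ or $C$ but not $D$; after type $B$ it is $B$, $C$ or $D$ but not $A$; after type $C$ it is forced to be $A$; after type $D$ it is $A$ or $B$. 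These rules depend only on the type, not on the index, since $\autgroup(X_\cO)\simeq\ZZ/3\ZZ$ acts by translation on the three tiles.

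These rules forbid strict sign-alternation beyond length $4$. The key observation is that type $C$ forces type $A$ immediately afterward, and $C$ and $A$ share sign $+1$, so $C$ cannot appear at any non-final position of a sign-alternating word. A short case split on the starting parity then shows that the only sign-alternating words respecting the rules are prefixes of $A,B,C$ (starting with $+$, length at most $3$) and prefixes of $D,A,B,C$ (starting with $-$, length at most $4$). Consequently for $n\ge 5$, and a fortiori for $n\ge 6$, there exists $k$ with $\delta_k=\delta_{k+1}$; the walk then advances by $\pm 2$ over two consecutive steps, covers three consecutive integers, and hence hits every residue modulo $3$. This gives $V\cap\cT_i\neq\emptyset$ for $i=0,1,2$. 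The only tedious step is the $16$-entry table of the previous paragraph, each case of which is a trivial linear inequality in two position variables constrained to unit intervals; once the rule ``$C$ forces $A$'' is isolated, the combinatorial conclusion is essentially immediate, so I anticipate no structural obstacle.
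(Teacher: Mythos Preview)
Your proof is correct. Both your argument and the paper's rest on the same underlying transition rules between letter types (after $A$: $A,B,C$; after $B$: $B,C,D$; after $C$: only $A$; after $D$: $A,B$), but the packaging differs. The paper fixes a hypothetically missed tile $\cT_0$, notes that this excludes eight specific letters from positions $1,\dots,n-1$, and then eliminates the four remaining letters $A_1,B_2,C_1,D_2$ one by one by chasing the transitions forward, until $\gamma_1$ has no admissible value when $n\ge 6$. You instead abstract the tile index to a $\pm1$ walk on $\ZZ/3\ZZ$, reduce ``misses a tile'' to strict sign-alternation of $(\delta_k)$, and use the constraint ``$C$ forces $A$'' to cap alternating words at length $4$. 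Your route is more structural and actually yields the sharper bound $n\ge 5$; the paper's direct letter-elimination is slightly quicker to execute once the eight forbidden letters are listed, but less transparent about why the bound is what it is.
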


\begin{proof}
Assume without loss of generality that $V$ does not cross the tile $\cT_0$. Then we have
$$
\gamma_k\not=A_0,B_0,C_0,D_0,C_2,D_1,A_2,B_1
\quad\text{for}\quad
k=1,\dots,n-1.
$$
Observing that 
$
\gamma_k=C_1\Rightarrow \gamma_{k+1}=A_2
$ 
we get
$$
\gamma_k\not=C_1
\quad\text{for}\quad
k=1,\dots,n-2.
$$
Since 
$
\gamma_k=B_2\Rightarrow \gamma_{k+1}\in\{D_1,C_1\} 
$ 
it follows 
$$
\gamma_k\not=B_2
\quad\text{for}\quad
k=1,\dots,n-3.
$$
Moreover we have 
$
\gamma_k=A_1\Rightarrow \gamma_{k+1}\in\{A_2,B_2,C_2\} 
$, 
therefore 
$$
\gamma_k\not=A_1
\quad\text{for}\quad
k=1,\dots,n-4.
$$
Finally 
$
\gamma_k=D_2\Rightarrow \gamma_{k+1}\in\{A_1,B_1\} 
$, 
which implies 
$$
\gamma_k\not=D_2
\quad\text{for}\quad
k=1,\dots,n-5.
$$
Since $n\geq6$, the conditions above imply that there is no value left for $\gamma_1$, which is absurd.
\end{proof}

\begin{lemma}
\label{LemmaHorizontalCrossesAllTiles}
Let $H$ be a segment with $\slope(H)<-1$ and assume that its cutting sequence 
$[H]=(\gamma_1,\dots,\gamma_n)$ contains $n\geq6$ letters. 
Then $H\cap\cT_i\not=\emptyset$ for $i=0,1,2$
\end{lemma}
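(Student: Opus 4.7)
The plan is to mirror the combinatorial argument of Lemma~\ref{LemmaVerticalCrossesAllTiles}, replacing the successor rules for segments of slope in $(0,1)$ with their analogues for slope $<-1$. Assume without loss of generality that $H$ does not cross $\cT_0$. Then the $8$ letters on the boundary of $\cT_0$, namely $A_0$, $B_0$, $A_2$, $B_1$, $D_0$, $C_0$, $D_1$, $C_2$, cannot appear as $\gamma_k$ for $k=1,\ldots,n-1$, and only the four letters $A_1$, $B_2$, $C_1$, $D_2$ remain to be eliminated.

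Next, I would establish the successor rules for a slope-$<-1$ segment. Such a segment moves in the direction $(\sin\theta,\cos\theta)$ with $\theta\in(-\pi/2,-\pi/4)$: up and to the left, with the leftward component dominating. By direct parametrization through the four unit squares of each $\cT_j$, using crucially that $|\slope(H)|>1$, one should obtain the forced transition $\gamma_k=A_i\Rightarrow\gamma_{k+1}=D_{i+2}$, together with $\gamma_k=B_i\Rightarrow\gamma_{k+1}\in\{D_i,C_{i+1}\}$, $\gamma_k=C_i\Rightarrow\gamma_{k+1}\in\{B_i,C_{i-1},A_i\}$, and $\gamma_k=D_i\Rightarrow\gamma_{k+1}\in\{D_{i+1},C_{i-1},A_i\}$, with indices mod $3$.

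Given these rules, one iterates exactly as in the vertical case. Since $A_1\Rightarrow D_0$ is forced into a boundary letter, $\gamma_k\neq A_1$ for $k\leq n-2$. All three successors $\{B_1,C_0,A_1\}$ of $C_1$ are then forbidden at position $k+1\leq n-2$, so $\gamma_k\neq C_1$ for $k\leq n-3$. Similarly all successors $\{D_0,C_1,A_2\}$ of $D_2$ are forbidden at $k+1\leq n-3$, so $\gamma_k\neq D_2$ for $k\leq n-4$. Finally both successors $\{D_2,C_0\}$ of $B_2$ are forbidden at $k+1\leq n-4$, so $\gamma_k\neq B_2$ for $k\leq n-5$. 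For $n\geq 6$, no value remains for $\gamma_1$ among the $12$ letters of $\cA$, the required contradiction.

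The main obstacle will be verifying the successor rules. Since $|\slope(H)|>1$ forces $H$ to be closer to horizontal than to vertical, a typical transition $\gamma_k\to\gamma_{k+1}$ requires the segment to pass through one or two internal (dashed) boundaries within the destination tile before reaching the next saddle connection, so the case analysis runs through up to three of the four sub-squares of that tile. I expect this bookkeeping to be routine but somewhat lengthier than the vertical case; I would organize it by enumerating, for each entry edge of $\cT_j$ and each sub-square reached, the one or two possible exits compatible with the slope condition, and then unifying the cases into the statements above.
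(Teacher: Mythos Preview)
Your proposal is correct and carries out in full the direct combinatorial argument that the paper only alludes to with the phrase ``an argument similar to Lemma~\ref{LemmaVerticalCrossesAllTiles}''. I checked your four successor rules by tracking the exit edge of the $2\times2$ tile $\cT_j$ (entering from $A_i$ one lands in the bottom-left unit square of $\cT_{i+1}$ and, since the horizontal displacement dominates, must exit through $D_{i+2}$; the other three cases go similarly), and your four-step elimination $A_1\to C_1\to D_2\to B_2$ is exactly right.

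The paper also records a shortcut you do not take: since $R\cdot X_\cO=X_\cO$, one applies an affine self-homeomorphism $\psi_R$ with linear part $R$ to $H$, obtaining a segment $V$ with $0<\slope(V)<1$ and the same cutting-sequence length, and then invokes Lemma~\ref{LemmaVerticalCrossesAllTiles} directly. That route avoids re-deriving transition rules but relies on the global symmetry of $X_\cO$ (in particular that $\psi_R$ permutes the three tiles). Your approach is more self-contained and makes the combinatorics explicit; the $R$-symmetry argument is shorter once Proposition~\ref{PropositionOrbitOrnithorynque} is available.
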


\begin{proof}
The Lemma follows by an argument similar to Lemma~\ref{LemmaVerticalCrossesAllTiles}.  Alternatively 
consider $R$ in Equation~\eqref{EquationGeneratorsSL(2,Z)}, observe that $V:=R(H)$ satisfies the assumption of Lemma~\ref{LemmaVerticalCrossesAllTiles}, and recall $R\cdot X_\cO=X_\cO$.
\end{proof}

\begin{lemma}
\label{LemmaIntersectionTwoHorizontalCylinders}
Fix segments $H,V$ with $\slope(H)<-1$ and $0<\slope(V)<1$ and cutting sequences 
$
[H]=(\gamma_1,\dots,\gamma_n)
$ 
and 
$
[V]=(\nu_1,\dots,\nu_m)
$ 
with $n\geq4$ and $m\geq8$. Fix $i=0,1,2$ and assume that there exists $2\leq k\leq n-1$ with 
$$
\gamma_k\in\{C_{i+2},A_i\}
\quad\text{ and }\quad
\gamma_{k+1}\in\{B_{i+1},D_{i}\}.
$$
Then $H\cap V\not=\emptyset$.
\end{lemma}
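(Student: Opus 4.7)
The plan is to exhibit a unit square $Q\subset\cT_i$ of $X_\cO$ that both $H$ and $V$ meet, and then apply Lemma~\ref{LemmaSegmentsSameSquareIntersect}. I work in local affine coordinates $[0,2]^2$ on $\cT_i$, with the four unit squares $(i,1,0),(i,0,0),(i,1,1),(i,0,1)$ in top-left, top-right, bottom-left, bottom-right positions. The four edges $C_{i+2}=C_{i-1}$, $A_i$, $B_{i+1}$, $D_i$ all lie on the boundary of $\cT_i$: $C_{i-1}$ and $A_i$ on the ``upper-left'' portion (the left-top and top-left outer edges), and $B_{i+1}$, $D_i$ on the ``lower-right'' portion. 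Hence the portion $H':=H|_{[t_k,t_{k+1}]}$ sits inside $\cT_i$ and is a straight chord from an upper-left boundary point to a lower-right one. The combination $(\gamma_k,\gamma_{k+1})=(A_i,B_{i+1})$ would force $\Delta y=-2$ and $\Delta x\in(0,2)$, giving $\slope(H')\in(-1,0)$ and contradicting $\slope(H)<-1$; this case is therefore vacuous. In each of the three remaining cases $(C_{i-1},D_i)$, $(C_{i-1},B_{i+1})$ and $(A_i,D_i)$, a direct coordinate check shows that $H'$ meets the bottom-right unit square $Q:=(i,0,1)=[1,2]\times[0,1]$ of $\cT_i$.

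The crucial step is to show $V$ also meets $Q$. I would argue by contrapositive, following the forbidden-letter method used in the proof of Lemma~\ref{LemmaVerticalCrossesAllTiles}. If $V\cap Q=\emptyset$, then $[V]$ contains neither $B_{i+1}$ nor $D_i$ (the two $\cA$-edges on $\partial Q$), and moreover each chord of $V$ inside $\cT_i$ must stay in the $L$-shape $\cT_i\setminus Q$. Since $\slope(V)\in(0,1)$, this $L$-shape constraint pins the admissible entry-exit pairs of $V$ through $\cT_i$ to the short list $(A_{i-1},A_i)$, $(A_{i-1},B_i)$, $(D_{i+1},A_i)$, $(D_{i+1},B_i)$, $(C_{i-1},A_i)$. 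A tile-by-tile enumeration of the resulting admissible transitions of $[V]$ across $\cT_{i-1}$ and $\cT_{i+1}$, mirroring the iterative tightening in the proof of Lemma~\ref{LemmaVerticalCrossesAllTiles}, yields an upper bound of $7$ on $m$, contradicting $m\geq 8$.

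With $H\cap Q\neq\emptyset$ and $V\cap Q\neq\emptyset$ secured, I pass to sub-segments: the piece of $H$ between the cutting times $t_{k-1}$ and $t_{k+1}$ (the left endpoint exists since $k\geq 2$), and a piece of $V$ around one of its cutting letters equal to $B_{i+1}$ or $D_i$. The hypotheses $n\geq 4$ and $m\geq 8$ provide enough buffer on either side so that each of these two sub-segments has endpoints lying on boundaries of unit squares other than $Q$. Lemma~\ref{LemmaSegmentsSameSquareIntersect} then gives $H\cap V\neq\emptyset$. The main obstacle is the enumeration in the second paragraph: both the slope-dependent determination of the admissible entry-exit pairs inside $\cT_i$, and the iterative pruning of next-letters in the cutting sequence across the other two tiles. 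I expect the bound of $7$ to match the hypothesis $m\geq 8$ tightly, in analogy with the bound obtained for tile visits in Lemma~\ref{LemmaVerticalCrossesAllTiles}.
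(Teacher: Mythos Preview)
Your first step is sound: the chord $H'$ of $H$ inside $\cT_i$ does meet the bottom-right square $Q_{(i,0,1)}$ in each of the three non-vacuous cases, and the case $(A_i,B_{i+1})$ is indeed excluded by the slope constraint. The problem is your second step: the claim that $V\cap Q_{(i,0,1)}=\emptyset$ forces $m\le 7$ is \emph{false}. Take $V$ with slope $\epsilon>0$ very small and starting near the left edge of some tile. Since $A_j$ is simultaneously the top-left edge of $\cT_j$ and the bottom-left edge of $\cT_{j+1}$, such a $V$ has cutting sequence $(A_0,A_1,A_2,A_0,A_1,\dots)$ for as many letters as you like, while each of its chords through every $\cT_j$ stays in the left half $Q_{(j,1,1)}\cup Q_{(j,1,0)}$. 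This $V$ avoids every $Q_{(j,0,1)}$, in particular $Q_{(i,0,1)}$, with $m$ arbitrarily large. Your own list of admissible entry--exit pairs through $\cT_i$ already contains $(A_{i-1},A_i)$, and since there is no constraint whatsoever on the chords through $\cT_{i\pm1}$ (only $B_{i+1}$ and $D_i$ are forbidden letters, and neither is needed for the $A$-cycle), no enumeration across $\cT_{i-1},\cT_{i+1}$ can terminate.

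The paper's proof sidesteps this by not insisting on a specific common square. It observes that $H'$ meets \emph{both} anti-diagonal squares $Q_{(i,1,0)}$ and $Q_{(i,0,1)}$ (equivalently, $\widetilde H$ meets at least three of the four squares and can miss only $Q_{(i,1,1)}$ or $Q_{(i,0,0)}$), and that any chord of slope in $(0,1)$ crossing $\cT_i$ from boundary to boundary must hit at least one of $Q_{(i,1,0)},Q_{(i,0,1)}$ --- it cannot be confined to $Q_{(i,1,1)}$ alone or to $Q_{(i,0,0)}$ alone. Lemma~\ref{LemmaVerticalCrossesAllTiles} applied to the inner subsegment $\widetilde V$ with cutting sequence $(\nu_2,\dots,\nu_{m-1})$ (six letters, since $m\ge 8$) guarantees $\widetilde V\cap\cT_i\neq\emptyset$, and then the geometric observation forces a common square with $\widetilde H$. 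Your argument can be repaired by replacing the attempted bound on $m$ with this observation; in the counterexample above, $V$ does meet $Q_{(i,1,0)}$ even though it misses $Q_{(i,0,1)}$.
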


\begin{proof}
Let $i\in\{0,1,2\}$ be as in the statement. Let $\widetilde{V}$ be the minimal subsegment of $V$ with cutting sequence 
$
[\widetilde{V}]=(\nu_2,\dots,\nu_{m-1})
$.
Lemma~\ref{LemmaVerticalCrossesAllTiles} implies $\widetilde{V}\cap\cT_i\not=\emptyset$. 
Let $\widetilde{H}$ be the minimal subsegment of $H$ with cutting sequence 
$
[\widetilde{H}]=(\gamma_2,\dots,\gamma_{n-1})
$. 
The assumption on $[H]$ implies that $\widetilde{H}$ intersects at least $3$ of the $4$ squares $Q_{(i,1,0)}$, $Q_{(i,0,0)}$, $Q_{(i,1,1)}$, $Q_{(i,0,1)}$ composing the tile $\cT_i$, where we recall the squares in an origami are closed and they overlap along the boundaries. Moreover the square missed by $\widetilde{H}$ can only be either $Q_{(i,1,1)}$ or $Q_{(i,0,0)}$, and it is crucial to observe that none of these two squares can contain $\widetilde{V}\cap\cT_i$. It follows that 
$
\widetilde{V}\cap Q\not=\emptyset
$ 
and $\widetilde{H}\cap Q\not=\emptyset$, where $Q$ is one of the four squares composing the tile $\cT_i$.  
Lemma~\ref{LemmaSegmentsSameSquareIntersect} gives $H\cap V\not=\emptyset$. 
\end{proof}

\begin{lemma}
\label{LemmaIntersectionOneHorizontalCylinder}
Fix segments $H,V$ with $\slope(H)<-1$ and $0<\slope(V)<1$ and cutting sequences 
$
[H]=(\gamma_1,\dots,\gamma_n)
$ 
and 
$
[V]=(\nu_1,\dots,\nu_m)
$ 
with $n\geq3$ and $m\geq7$. Fix $i=0,1,2$ and assume that 
\begin{equation}
\label{EquationLemmaIntersectionOneHorizontalCylinder}
(\gamma_k,\gamma_{k+1},\gamma_{k+2})=(C_{i+2},C_i,C_{i+1})
\quad\text{ or }\quad
(\gamma_k,\gamma_{k+1},\gamma_{k+2})=(D_i,D_{i+2},D_{i+1}).
\end{equation}
Then $H\cap V\not=\emptyset$.
\end{lemma}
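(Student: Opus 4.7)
I treat the case $(\gamma_k, \gamma_{k+1}, \gamma_{k+2}) = (C_{i+2}, C_i, C_{i+1})$; the $D$-case is parallel, with the bottom horizontal cylinder of $X_\cO$ replacing the top one and $\cT_i$ playing the role of $\cT_{i+2}$. Following Lemma~\ref{LemmaIntersectionTwoHorizontalCylinders}, the plan is to produce subsegments $\widetilde H$ of $H$ and $\widetilde V$ of $V$ meeting a common unit square $Q$, and then to apply Lemma~\ref{LemmaSegmentsSameSquareIntersect}.

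Let $\widetilde H$ be the minimal subsegment of $H$ with cutting sequence $(C_{i+2}, C_i, C_{i+1})$. From the permutations $h, v$, each saddle connection $C_j$ is the common edge of $Q_{(j,0,0)}$ and $Q_{(j+1, 1, 0)}$, both lying in the top horizontal cylinder of $X_\cO$: the cylinder of height $1$ and circumference $6$ formed by the upper halves of the three tiles, decomposing into three tile-arcs of length $2$. Since $\slope(H) < 0$, the $y$-coordinate of $H$ is strictly monotone in every local chart, so $H$ cannot cross the dotted line $\{y \equiv 1\}$ separating the two cylinders in both directions. Hence $\widetilde H$ stays inside the top cylinder, fully traverses the upper halves of $\cT_i$ and $\cT_{i+1}$ from left to right, and has both endpoints in the upper half of $\cT_{i+2}$.

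Next, let $\widetilde V$ be a subsegment of $V$ of cutting-sequence length at least $6$, whose endpoints lie on labeled saddle connections (arranged as in Lemma~\ref{LemmaIntersectionTwoHorizontalCylinders}, using $m \geq 7$ to allow shifting the trimming by one letter). By Lemma~\ref{LemmaVerticalCrossesAllTiles}, $\widetilde V \cap \cT_j \neq \emptyset$ for $j = 0, 1, 2$. On the other hand, any segment of slope in $(0, 1)$ contained in a single horizontal cylinder of $X_\cO$ has vertical extent at most $1$ and hence horizontal extent strictly less than $1$; so inside one cylinder, such a segment meets at most one full tile-arc of length $2$ (possibly straddling one adjacent arc). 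Consequently $\widetilde V$ must alternate between the two cylinders. Moreover, if every visit of $\widetilde V$ to the top cylinder were confined to the tile-arc of $\cT_{i+2}$, the remaining portion of $\widetilde V$ in the bottom cylinder would have total horizontal span $< 1$ and could not reach both $\cT_i$ and $\cT_{i+1}$. Thus $\widetilde V$ meets the upper half of some tile $\cT_{i'}$ with $i' \neq i+2$.

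Choose $a' \in \{0, 1\}$ with $Q := Q_{(i', a', 0)}$ meeting $\widetilde V$; the second paragraph gives $\widetilde H \cap Q \neq \emptyset$ as well. Since $i' \neq i+2$, the endpoints of $\widetilde H$ lie outside $Q$, and by construction the endpoints of $\widetilde V$ lie on labeled saddle connections not incident to $Q$. Lemma~\ref{LemmaSegmentsSameSquareIntersect} then yields $\widetilde H \cap \widetilde V \neq \emptyset$, hence $H \cap V \neq \emptyset$. The main obstacle is the third paragraph: the pigeonhole argument ensuring that $\widetilde V$'s visits to the top cylinder cannot all be confined to the tile-arc of $\cT_{i+2}$, which combines the cylinder-height constraint with the three-tile output of Lemma~\ref{LemmaVerticalCrossesAllTiles}.
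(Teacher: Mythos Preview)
Your overall plan---find a unit square in the upper half of $\cT_i$ or $\cT_{i+1}$ that both $\widetilde H$ and $\widetilde V$ meet, then invoke Lemma~\ref{LemmaSegmentsSameSquareIntersect}---is sound, and differs from the paper's route (see below). But the pigeonhole step in your third paragraph does not work as written.

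The claim ``the remaining portion of $\widetilde V$ in the bottom cylinder would have total horizontal span $<1$'' is false: you only know each \emph{individual} visit to a horizontal cylinder has horizontal extent $<1$, not the sum. Worse, even a single bottom visit can touch both $\cT_i$ and $\cT_{i+1}$: exiting $\cT_{i+2}$'s top through $B_{i+2}$ lands in $Q_{(i+1,0,1)}\subset\cT_{i+1}$, and moving right by less than $1$ one may cross $D_{i+1}$ into $Q_{(i,1,1)}\subset\cT_i$. So the sentence ``could not reach both $\cT_i$ and $\cT_{i+1}$'' fails, and with it your contradiction with Lemma~\ref{LemmaVerticalCrossesAllTiles}. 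The conclusion you want is nonetheless true, but for a different reason: once a top-cylinder visit confined to $\cT_{i+2}$ exits via $A_{i+2}$ or $B_{i+2}$, the subsequent bottom visit lies in $\cT_i$ or $\cT_{i+1}$, and the \emph{next} top visit (entered through the dashed line $y=0$) is therefore in $\cT_i$ or $\cT_{i+1}$, not $\cT_{i+2}$. Hence at most one top visit can be confined to $\cT_{i+2}$, and a direct count then gives at most three letters in $[\widetilde V]$, contradicting $m-1\ge 6$. That is the argument you need here; the ``span $<1$'' heuristic does not carry it.

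For comparison, the paper does not attempt to force $\widetilde V$ into the upper half of $\cT_i\cup\cT_{i+1}$. It splits $\widetilde H$ into $\widetilde H_a\subset\cT_i$ and $\widetilde H_b\subset\cT_{i+1}$, uses Lemma~\ref{LemmaVerticalCrossesAllTiles} only to get $\widetilde V\cap\cT_{i+1}\neq\emptyset$, and then argues by a short cutting-sequence dichotomy: either $\widetilde V$ already meets $\widetilde H_b$, or the miss forces the pattern $(B_{i+2},D_{i+1})$ in $[\widetilde V]$, after which the next two letters place a subsegment of $\widetilde V$ across $\cT_i$ in a position that must hit $\widetilde H_a$. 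This avoids any global pigeonhole on cylinder visits.
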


\begin{proof}
Let $\widetilde{V}$ be the minimal subsegment of $V$ with cutting sequence 
$[\widetilde{V}]=(\nu_1,\dots,\nu_{m-1})$. Assume first 
$
(\gamma_k,\gamma_{k+1},\gamma_{k+2})=(C_{i+2},C_i,C_{i+1})
$. 
Let $\widetilde{H}_a,\widetilde{H}_b$ be respectively the minimal subsegments of $H$ with 
$[\widetilde{H}_a]=(C_{i+2},C_i)$ and $[\widetilde{H}_b]=(C_i,C_{i+1})$, so that in particular 
$\widetilde{H}_a\subset\cT_i$ and $\widetilde{H}_b\subset\cT_{i+1}$. 
We have $\widetilde{V}\cap \cT_{i+1}\not=\emptyset$ by 
Lemma~\ref{LemmaVerticalCrossesAllTiles}. If 
$
\widetilde{V}\cap\widetilde{H}_b=\emptyset
$ 
then there exists $k$ with $1\leq k\leq m-2$ with $(\nu_k,\nu_{k+1})=(B_{i+2},D_{i+1})$. Therefore the subsegment of $\widetilde{V}$ encoded by $(\nu_{k+1},\nu_{k+2})$ satisfies 
$\nu_{k+1}=D_{i+1}$ and $\nu_{k+2}\in\{A_i,B_i\}$, and this last property implies that such subsegment intersects $\widetilde{H}_a$. Thus $H\cap V\not=\emptyset$. Now assume 
$
(\gamma_k,\gamma_{k+1},\gamma_{k+2})=(D_i,D_{i+2},D_{i+1})
$. 
Let $\widetilde{H}_c,\widetilde{H}_d$ be respectively the minimal subsegments of $H$ with 
$[\widetilde{H}_c]=(D_i,D_{i+2})$ and $[\widetilde{H}_d]=(D_{i+2},D_{i+1})$, so that in particular 
$\widetilde{H}_c\subset\cT_{i+2}$ and $\widetilde{H}_d\subset\cT_{i+1}$. 
We have $\widetilde{V}\cap \cT_{i+1}\not=\emptyset$ by 
Lemma~\ref{LemmaVerticalCrossesAllTiles}. If 
$
\widetilde{V}\cap\widetilde{H}_d=\emptyset
$ 
then there exists $k$ with $1\leq k\leq m-2$ with $(\nu_k,\nu_{k+1})=(C_i,A_{i+1})$. Therefore the subsegment of $\widetilde{V}$ encoded by $(\nu_{k+1},\nu_{k+2})$ satisfies 
$\nu_{k+1}=A_{i+1}$ and $\nu_{k+2}\in\{A_{i+2},B_{i+2}\}$, and this last property implies that such subsegment intersects $\widetilde{H}_c$. Again it follows $H\cap V\not=\emptyset$. The Lemma is proved.
\end{proof}

\subsection{Proof of Proposition~\ref{PropositionMainIntersectionProperty}}
\label{SectionEndProofMainProposition}

Let $[H]=(\gamma_1,\dots,\gamma_n)$ and $[V]=(\nu_1,\dots,\nu_m)$ be the cutting sequences of $H,V$ respectively. Since $|H|,|V|\geq\sqrt{288}$, then we have both $n\geq12$ and $m\geq12$. 
Assume that the cutting sequence $[H]$ of $H$ does not satisfy 
Equation~\eqref{EquationLemmaIntersectionOneHorizontalCylinder} for any $i=0,1,2$. 
Then we must have $-6<\slope(H)<-1$. Since $n\geq12$, then $H$ satisfies the assumption of 
Lemma~\ref{LemmaIntersectionTwoHorizontalCylinders}.
Proposition~\ref{PropositionMainIntersectionProperty} follows.

\section{The general criterion}
\label{SectionGeneralCriterion}


\begin{theorem}
\label{TheoremGeneralCriterion}
Let $X$ be an origami and assume that there exists a constant $\cK>0$ such that for any origami $Y\in\sltwoz\cdot X$ and any pair of segments $H,V\subset Y$ we have $H\cap V\not=\emptyset$ whenever they have length $|H|,|V|\geq \cK$ and satisfy
\begin{itemize}
\item 
either $\slope(H)<-1$ and $0<\slope(V)<1$
\item
or $-1<\slope(H)<0$ and $\slope(V)>1$.
\end{itemize}
Then $H(X,\alpha,p)=w(\alpha)$ for any $\alpha$ irrational and any $p$ outside $(X,\alpha)$-singular leaves. 
\end{theorem}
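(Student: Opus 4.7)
The lower bound $H(X,\alpha,p)\geq w(\alpha)$ is exactly Lemma~\ref{LemmaLowerBoundHittingTime}, so I focus on the upper bound $H(X,\alpha,p)\leq w(\alpha)$. The strategy is continued fraction renormalization inside $\sltwoz$: for each small scale $r>0$ on $X$ I pass to a renormalized origami $Y_n\in\sltwoz\cdot X$ on which the hypothesized intersection property applies at bounded scale, and then translate the conclusion back to $X$.

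Write $\alpha=[a_0;a_1,a_2,\ldots]$ with convergents $p_n/q_n$. For each $n$ let $A_n\in\sltwoz$ be the matrix built from the initial partial quotients, so that $Y_n:=A_n\cdot X\in\sltwoz\cdot X$ and the M\"obius image $A_n\cdot\alpha$ lies in a fixed bounded fundamental domain for the Gauss map. Up to the parity of $n$, its slope lies in $(0,1)$ (putting the transported flow in the role of ``$V$'' in the first bullet of the hypothesis) or in $(1,\infty)$ (the role of ``$V$'' in the second bullet). The singular values of $A_n$ are of order $q_n$ and $q_n^{-1}$, with contracting eigendirection on $X$ essentially parallel to $e_\alpha$ and expanding eigendirection essentially perpendicular to $e_\alpha$. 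Consequently, through the affine homeomorphism $\psi_{A_n}:X\to Y_n$ of \S~\ref{SectionAffineHomeomorphisms}, a $\phi_\alpha$-orbit segment of time $T$ on $X$ becomes a segment on $Y_n$ of length of order $T/q_n$, while a segment on $X$ of length $\ell$ with direction close to $e_\alpha^{\perp}$ becomes a segment on $Y_n$ of length of order $\ell\,q_n$.

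Fix $r>0$ small and a target point $q\in X$. Choose $n$ minimal with $q_n\geq \cK/r$, so $q_{n-1}<\cK/r$. Construct a transversal $V_q$ through $q$ of ambient length $2r$, with direction tilted slightly from $e_\alpha^{\perp}$ so that, after the M\"obius action of $A_n$ on slopes, the slope of $\psi_{A_n}(V_q)$ on $Y_n$ lies in the range complementary to the transported orbit (namely $<-1$ in the first case, or $\in(-1,0)$ in the second), matching one of the two bullets of the hypothesis. Set $T:=\cK q_n$. Then the transported orbit and $\psi_{A_n}(V_q)$ on $Y_n$ both have length $\geq\cK$ with slopes in the required ranges, so the intersection property furnishes a point in their intersection; pulling back by $\psi_{A_n}^{-1}$ shows that the $\phi_\alpha$-orbit of $p$ crosses $V_q$, hence comes within distance $r$ of $q$. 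Since $q$ is arbitrary and $p$ lies outside singular leaves (so its orbit is defined for all $t>0$), this yields $T(X,\alpha,p,r)\leq \cK q_n$ for all sufficiently small $r$.

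To conclude, invoke the Diophantine type: for each $\epsilon>0$ and all sufficiently large $n$ one has $q_{n+1}\leq q_n^{w(\alpha)+\epsilon}$, equivalently $q_n\leq q_{n-1}^{w(\alpha)+\epsilon}$. Combined with $q_{n-1}<\cK/r$ this gives $q_n\leq C\,r^{-(w(\alpha)+\epsilon)}$, hence $T(X,\alpha,p,r)\leq C'\,r^{-(w(\alpha)+\epsilon)}$, so $H(X,\alpha,p)\leq w(\alpha)+\epsilon$; letting $\epsilon\to 0$ finishes the proof. The main obstacle I anticipate is the slope bookkeeping: the exact perpendicular transversal does not in general produce a slope on $Y_n$ landing in the complementary range of the intersection property, so one must tilt $V_q$ with care and distinguish between even and odd parities of $n$, exploiting both bullets of the hypothesis accordingly. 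This reduces to a concrete but delicate M\"obius computation within $\sltwoz$, analogous in spirit to the torus case of~\cite{KimSeo} and the Wollmilchsau case of~\cite{KimMarcheseMarmi}.
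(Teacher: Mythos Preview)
Your plan is correct and matches the paper's approach essentially step for step: continued-fraction renormalization via $A_n\in\sltwoz$, a parity split between even and odd $n$ to land the renormalized flow direction in $(0,1)$ or $(1,\infty)$, a nearly-perpendicular transversal through the target whose pull-back has length $\geq\cK$ and slope in the complementary range, application of the intersection hypothesis on $Y_n$, and the diophantine bound $q_n\leq q_{n-1}^{w(\alpha)+\epsilon}$ to close. The paper carries out exactly the M\"obius computation you flag as the obstacle, checking that the perpendicular slope $-q_n/p_n$ on $X$ pulls back to a slope $<-1$ (even $n$) or in $(-1,0)$ (odd $n$), and bounds the transversal length via a cylinder-width lemma; your $\epsilon$-version of the diophantine step is in fact cleaner than the paper's claim ``$q_n\leq K\,q_{n-1}^{w}$''.
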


\subsection{Proof of Main Theorem~\ref{TheoremMainTheorem}}
\label{SectionProofMainTheorem}

Recall that $\sltwoz\cdot X_\cO=X_\cO$ by Proposition~\ref{PropositionOrbitOrnithorynque}. 
Theorem~\ref{TheoremMainTheorem} follows combining Theorem~\ref{TheoremGeneralCriterion} with Proposition~\ref{PropositionMainIntersectionProperty} and Corollary~\ref{CorollaryMainIntersectionProperty}

\subsection{Continued fractions}

Let $T,V$ be as in Equation~\eqref{EquationGeneratorsSL(2,Z)}. 
Consider positive integers $a_1,\dots,a_n$ and define $g(a_1,\dots,a_n)\in\sltwoz$ by
\begin{equation}
\label{EquationContinuedFractionSL(2,Z)}
g(a_1,\dots,a_n):=
\left\{
\begin{array}{c}
V^{a_1}\circ\dots\circ
V^{a_{n-1}}\circ T^{a_n}
\quad
\textrm{ for even }
n;
\\
V^{a_1}\circ\dots\circ
T^{a_{n-1}}\circ V^{a_n}
\quad
\textrm{ for odd }
n.
\end{array}
\right.
\end{equation}

Let $[\alpha]:=\max\{k\in\ZZ,k\leq \alpha\}$ be the \emph{integer part} and 
$\{\alpha\}:=\alpha-[\alpha]$ be the \emph{fractional part} of $\alpha\in\RR$, where 
$0\leq\{\alpha\}<1$. The \emph{Gauss map} $G:[0,1)\to[0,1)$ is defined by 
$$
G(\alpha):=\{\alpha^{-1}\}
\quad\text{ for }\quad \alpha\in[0,1).
$$
Any irrational $\alpha\in(0,1)$ admits an unique \emph{continued fraction expansion} 
\begin{equation}
\label{EquationContinuedFraction}
\alpha=[a_1,a_2,\dots]:=\cfrac{1}{a_1+\cfrac{1}{a_2+\dots}},
\end{equation}
where we set $\alpha_0:=\alpha$ and $\alpha_n:=G(\alpha_{n-1})$ for $n\geq1$, so that the $n$-th \emph{partial quotient} of $\alpha$ is given by 
$$
a_n:=\left[\frac{1}{\alpha_{n-1}}\right]
\quad
\textrm{ that is }
\quad
\frac{1}{\alpha_{n-1}}=a_n+\alpha_n.
$$
The $n$-th convergent $p_n/q_n:=[a_1,\dots,a_n]$ of $\alpha$ is obtained truncating Equation~\eqref{EquationContinuedFraction} to its $n$-th partial quotient $a_n$. We get 
\begin{equation}
\label{EquationMatrixContinuedFraction}
g(a_1,\dots,a_{2n-1})=
\begin{pmatrix}
p_{2n-1} & p_{2n-2} \\
q_{2n-1} & q_{2n-2}
\end{pmatrix}
\quad
\textrm{ and }
\quad
g(a_1,\dots,a_{2n})=
\begin{pmatrix}
p_{2n-1} & p_{2n} \\
q_{2n-1} & q_{2n}
\end{pmatrix}
\end{equation}
from the recursive relations $q_n=a_nq_{n-1}+q_{n-2}$ and $p_n=a_np_{n-1}+p_{n-2}$. 
Therefore
\begin{equation}
\label{EquationConvergentsProjectiveAction}
p_n/q_n=
\left\{
\begin{array}{c}
g(a_1,\dots,a_n)\cdot 0
\quad
\textrm{ for even }
n\\
g(a_1,\dots,a_n)\cdot \infty
\quad
\textrm{ for odd }
n.
\end{array}
\right.
\end{equation}

We have 
$
\alpha_n^{-1}=a_{n+1}+\alpha_{n+1}
\Leftrightarrow
\alpha_n=V^{a_{n+1}}\cdot \alpha_{n+1}^{-1}
\Leftrightarrow
\alpha_n^{-1}=T^{a_{n+1}}\cdot \alpha_{n+1}
$. 
Hence
\begin{equation}
\label{EquationActionSL(2,Z)SlopesIrrational}
\alpha=
g(a_1,\dots,a_{2k})\cdot \alpha_{2k}=
g(a_1,\dots,a_{2k},a_{2k+1})\cdot \frac{1}{\alpha_{2k+1}}
\quad\text{ for any }\quad k\in\NN.
\end{equation}

\subsection{Proof of Theorem~\ref{TheoremGeneralCriterion}}
\label{SectionProofTheoremGeneralCriterion}

Let $X$ be an origami as in Theorem~\ref{TheoremGeneralCriterion}. 
Write real numbers as $\alpha=a+\widetilde{\alpha}$, where $a:=[\alpha]$ and $\widetilde{\alpha}:=\{\alpha\}$ are the integer and fractional part respectively. Set $Y:=T^{-a}\cdot X$ and let $\psi:X\to Y$ be an affine homeomorphism with 
$D\psi=T^{-a}$ as in \S~\ref{SectionAffineHomeomorphisms}. We have $\kappa>0$ with 
$$
\phi_{\widetilde{\alpha}}\big(t,\psi(p)\big)
=
\psi\big(\phi_\alpha\big(\kappa t,p)\big)
\quad\text{ for any }\quad 
t\in\RR\text{ and }p\in X. 
$$ 
Thus 
$
H\big(Y,\widetilde{\alpha},\psi(p)\big)=H(X,\alpha,p)
$ 
and obviously any $Y\in\sltwoz\cdot X$ satisfies the same assumption as $X$. 
On the other hand $w(\widetilde{\alpha})=w(\alpha)$. 
Hence it is no loss of generality to consider $\alpha\in(0,1)$. We use Proposition~\ref{PropositionHittingSpecialTimes} below, whose proof is postponed to 
\S~\ref{SectionProofPropositionHittingSpecialTimes}.

\begin{proposition}
\label{PropositionHittingSpecialTimes}
Let $X$ and $\cK>0$ be an origami and a constant as in Theorem~\ref{TheoremGeneralCriterion}. 
Fix a slope  
$
\alpha=[a_1,a_2,\dots]\in(0,1)
$. 
For any $p\in X$ outside singular leaves and $n\in\NN$ we have
$$
T(X_\cO,\alpha,p,r_n)\leq 4\cK\cdot q_n
\quad
\textrm{ where }
\quad
r_n:=\frac{2(\cK+1)}{q_{n}}
$$
\end{proposition}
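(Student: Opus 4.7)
The strategy is renormalization via an affine diffeomorphism. Set $A := g(a_1,\dots,a_n) \in \sltwoz$ and let $\psi : X \to Y := A^{-1}\cdot X$ be an affine homeomorphism with $D\psi = A^{-1}$, as in Equation~\eqref{EquationAffineHomeomorphisms}. Since $A^{-1} \in \sltwoz$, we have $Y \in \sltwoz\cdot X$, so the intersection hypothesis of Theorem~\ref{TheoremGeneralCriterion} holds on $Y$. The plan is to transport a long $\phi_\alpha$-orbit segment on $X$ and a short transverse horizontal segment through the target point $\widetilde{p}$ to images on $Y$ whose slopes and lengths match one of the two configurations in the intersection hypothesis, forcing them to intersect on $Y$; the intersection then descends via $\psi^{-1}$ to the required close approach on $X$.

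The essential geometric data come from Equations~\eqref{EquationMatrixContinuedFraction}--\eqref{EquationActionSL(2,Z)SlopesIrrational}. Projectively, $A^{-1}\cdot\alpha$ equals $\alpha_n \in (0,1)$ for even $n$ and $1/\alpha_n > 1$ for odd $n$. Vectorially, $A^{-1}(1,0)$ has norm $\sqrt{q_n^2 + q_{n-1}^2} \geq q_n$, and slope $-q_n/q_{n-1} < -1$ for even $n$ (resp.\ $-q_{n-1}/q_n \in (-1,0)$ for odd $n$, provided $q_n > q_{n-1}$). The classical two-sided continued-fraction bounds $1/(2q_n) < |q_{n-1}\alpha - p_{n-1}| < 1/q_n$ and $|q_n\alpha - p_n| < 1/q_n$ give $\|A^{-1}e_\alpha\| \in \bigl(1/(2\sqrt{2}q_n),\,\sqrt{2}/q_n\bigr)$, so $\psi$ rescales $\phi_\alpha$-lengths by a factor of order $1/q_n$.

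Now fix $p \in X$ outside singular leaves and $\widetilde{p} \in X$. Set $V_p := \{\phi_\alpha(t,p) : r_n < t \leq 4\cK q_n\}$ and let $H_{\widetilde{p}}$ be a horizontal straight segment of length $\ell := 4(\cK+1)/q_n = 2r_n$ through $\widetilde{p}$ (split at any interior conical point, keeping a sub-segment of length at least $\ell/2$). Then $\psi(V_p)$ is a straight segment on $Y$ of slope in $(0,1)$ or $(1,\infty)$ (according to the parity of $n$) and length at least $(4\cK q_n - r_n)/(2\sqrt{2}q_n) \geq \cK$, the constant $4\cK$ being chosen so that this bound holds. Likewise $\psi(H_{\widetilde{p}})$ is a straight segment on $Y$ of slope in $(-\infty,-1)$ or $(-1,0)$ and length at least $(\ell/2) \cdot q_n = 2(\cK+1) > \cK$. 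In either parity the slope-and-length configuration matches the intersection hypothesis, yielding $\psi(V_p) \cap \psi(H_{\widetilde{p}}) \neq \emptyset$ and hence $V_p \cap H_{\widetilde{p}} \neq \emptyset$ on $X$. This provides $t \in (r_n, 4\cK q_n]$ with $\phi_\alpha(t,p) \in H_{\widetilde{p}}$, so $d(\phi_\alpha(t,p), \widetilde{p}) \leq \ell/2 = r_n$, as required.

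The main obstacle will be pinning down the lower bound $|\psi(V_p)| \geq \cK$: it needs the sharp lower estimate $|q_{n-1}\alpha - p_{n-1}| > 1/(2q_n)$ from the two-sided continued-fraction inequality rather than just the standard one-sided bound on $|q_n\alpha - p_n|$, and it is what dictates the specific constant $4\cK$ in the statement. A secondary technical point is that $H_{\widetilde{p}}$ may contain an interior conical point when $\widetilde{p}$ lies close to a singularity: for $q_n$ large enough, $\ell$ is small, so at most one such point occurs, and splitting $H_{\widetilde{p}}$ at it retains a sub-segment whose $\psi$-image still satisfies the length hypothesis.
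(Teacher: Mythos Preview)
Your approach is correct and uses the same renormalization via $A=g(a_1,\dots,a_n)$ as the paper, but the implementation differs in two places worth flagging. The paper builds the segments $H$ and $V$ directly on the renormalized surface with length exactly $\cK$, then bounds their images on $X$: for $V$ via the crude operator norm $\|A\|\leq 4q_n$, and for $\widetilde H$ via the cylinder--decomposition estimate of Lemma~\ref{LemmaLengthTransversalSegment} after choosing the transverse slope $\beta$ irrational and close to the orthogonal direction $-q_n/p_n$. You instead fix the segments on $X$ and push forward to $Y$, which forces you to control the \emph{contraction} of $A^{-1}$ along $e_\alpha$; this is precisely where the two-sided bound $|q_{n-1}\alpha-p_{n-1}|>1/(2q_n)$ enters, a step the paper never needs. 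For the transverse segment you take the horizontal direction $(1,0)$ and compute $A^{-1}(1,0)$ directly, which bypasses Lemma~\ref{LemmaLengthTransversalSegment} entirely but, being a rational slope, may meet conical points; your splitting argument handles this cleanly provided $\ell<1$, i.e.\ $q_n>4(\cK+1)$, which is enough for the asymptotic application in \S\ref{SectionProofTheoremGeneralCriterion}. In short: the paper's route gives the stated constants for all $n$ without the sharp lower continued-fraction bound, while yours is more elementary (pure linear algebra on $A^{-1}$, no cylinder lemma) at the cost of that extra diophantine input and a harmless restriction to large $n$.
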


Set $w:=w(\alpha)$, so that $q_n\leq K\cdot q_{n-1}^w$ for some $K$ and all $n$. 
Fix $p\in X$ outside singular leaves. 
For any $r>0$ small enough consider $n$ with $r_{n-1}\leq r<r_{n}$. 
Proposition~\ref{PropositionHittingSpecialTimes} gives
\begin{align*}
\frac{\log T(X_\cO,\alpha,p,r)}{|\log r|}
&
\leq
\frac{\log T(X_\cO,\alpha,p,r_{n})}{|\log r_{n-1}|}
\leq
\frac{\log4\cK+\log q_n}{\log q_{n-1}-\log2(\cK+1)}
\\
&
\leq
\frac{\log4\cK+\log K+w\cdot \log q_{n-1}}{\log q_{n-1}-\log2(\cK+1)}\to w
\quad
\textrm{ for }
\quad
n\to+\infty.
\end{align*}
Hence $H(X,\alpha,p)\leq w$. Lemma~\ref{LemmaLowerBoundHittingTime} gives the other inequality. Theorem~\ref{TheoremMainTheorem} is proved. $\qed$

\subsection{Cylinder decompositions}

Let $X$ be any origami. A \emph{closed geodesic} is a straight segment $\sigma:[a,b]\to X$ with 
$\sigma(a)=\sigma(b)$, where such point is not conical. If $\rho_X$ is the covering in 
Equation~\eqref{EquationCoveringOverTorus}, then $\rho_X\circ\sigma$ is a closed geodesic in $\TT^2$ and must have rational slope. Thus $\slope(\sigma)\in\QQ\cup\{\infty\}$. 
Given any $p/q$ rational, a \emph{cylinder} in slope $p/q$ is a maximal open and connected subset 
$C\subset X$ foliated by closed geodesics $\sigma$ with same length and $\slope(\sigma)=p/q$. 
Set $\slope(C):=p/q$ and $|C|:=|\sigma|$, where $\sigma$ is any closed geodesic as above. The boundary 
$\partial C$ is union of saddle connections with slope $p/q$. 

Referring to Figure~\ref{FigureOrnithorynque}, the vertical path $\sigma:[0,6]\to X_\cO$ such that $\sigma(2i)$ is the middle point of $A_i$ for $i=0,1,2$ is an example of closed geodesic in $X_\cO$. 
We have $|\sigma|=6$ and $\slope(\sigma)=0$. The two vertical cylinders of $X_\cO$ are 
$$
C^{(+)}_0:=\bigcup_{i=0,1,2}Q_{(i,1,1)}\cup Q_{(i,1,0)}
\quad\text{ and }\quad
C^{(-)}_0:=\bigcup_{i=0,1,2}Q_{(i,0,1)}\cup Q_{(i,0,0)}
$$
We have a decomposition $X_\cO=C^{(+)}_0\cup C^{(-)}_0$, where the boundaries of the two cylinders are made by vertical saddle connections. 

Referring to~\cite{ForniMatheusIntroduction}, recall that any origami $X$ admits a \emph{cylinder decomposition} in the vertical slope $p/q=0$, with a number $l\geq1$ of cylinders $C^{(1)}_0,\dots,C^{(l)}_0$. For $i=1,\dots,l$ any cylinder has $\slope(C^{(i)}_0)=0$, integer length $L_i:=|C^{(i)}_0|$ and integer \emph{width} $W_i$, where $W_i$ is defined as the length of an horizontal segment in $C^{(i)}_0$ with endpoints in $\partial C^{(i)}_0$. Fix $p/q\in\QQ\cup\{\infty\}$, take $A\in\sltwoz$ with $A\cdot\infty=p/q$ and an origami $Y$ with $A\cdot Y=X$. Let $\psi:Y\to X$ be an affine homeomorphism with $D\psi=A$, as in \S~\ref{SectionAffineHomeomorphisms}. The vertical cylinder decomposition
$
Y=C^{(1)}_0\cup\dots\cup C^{(l)}_0
$ 
induces the cylinder decomposition of $X$ in slope $p/q$, that is  
\begin{equation}
\label{EquationCylinderDecomposition}
X=C^{(1)}_{p/q}\cup\dots\cup C^{(l)}_{p/q}
\quad\text{ where }\quad
C^{(i)}_{p/q}:=\psi(C^{(i)}_0)
\quad\text{ for }\quad
i=1,\dots,l.
\end{equation}

\begin{lemma}
\label{LemmaLengthTransversalSegment}
Consider an origami $X$, a slope $p/q\in\QQ\cup\{\infty\}$ and the decomposition in Equation~\eqref{EquationCylinderDecomposition}. Let $H$ be a segment in $X$ crossing the cylinders $C^{j_1}_{p/q},\dots,C^{j_n}_{p/q}$. We have 
$$
|H|\leq 
\frac{W_{j_1}+\dots+W_{j_n}}
{\sqrt{q^2+p^2}\cos\big|\arctan\big(\slope(H)\big)-\arctan(-q/p)\big|}.
$$ 
\end{lemma}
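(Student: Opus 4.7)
The plan is to prove the estimate by projecting $H$ onto the direction perpendicular to the closed geodesics of the cylinders $C^{(1)}_{p/q},\dots,C^{(l)}_{p/q}$, and exploiting the elementary fact that the projection of any straight subsegment lying inside a single flat cylinder is bounded by the transversal width of that cylinder.

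First I would pin down the transversal width of $C^{(i)}_{p/q}$. Choose $A\in\sltwoz$ with $A\cdot\infty=p/q$; assuming $\gcd(p,q)=1$ we may take $A=\begin{pmatrix} a & p \\ c & q \end{pmatrix}$ with $aq-cp=1$. The image under $\psi$ of a vertical closed geodesic of $C^{(i)}_0$ of length $L_i$ is a closed geodesic of $C^{(i)}_{p/q}$ with direction vector $A(0,1)^T=(p,q)^T$, hence of length $L_i\sqrt{p^2+q^2}$. Since $\det A=1$, areas are preserved: $\area(C^{(i)}_{p/q})=\area(C^{(i)}_0)=L_iW_i$. Comparing area to geodesic length times transversal width shows that the transversal width of $C^{(i)}_{p/q}$ equals $W_i/\sqrt{p^2+q^2}$. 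A unit vector perpendicular to $(p,q)$ is proportional to $(-q,p)$, which has slope $-q/p$, matching the angle appearing in the statement.

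Next I would decompose $H$ along the boundaries of the cylinders. Let $H_1,\dots,H_n$ denote the closures of the connected components of $H\setminus\bigcup_i\partial C^{(i)}_{p/q}$, enumerated in order along $H$, so that $H_k\subset\overline{C^{(j_k)}_{p/q}}$ and $\sum_k|H_k|=|H|$. Since the transversal coordinate is monotone along any straight segment in a flat cylinder that is not parallel to the geodesic direction, the orthogonal projection of $H_k$ onto the line of slope $-q/p$ has length at most the transversal width $W_{j_k}/\sqrt{p^2+q^2}$.

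Finally, setting $\phi_H:=\arctan(\slope(H))$ and $\phi_\perp:=\arctan(-q/p)$, the angle between $H$ and the perpendicular direction is $|\phi_H-\phi_\perp|$, so the projection of $H_k$ onto this perpendicular has length $|H_k|\cos|\phi_H-\phi_\perp|$. Summing the inequalities $|H_k|\cos|\phi_H-\phi_\perp|\leq W_{j_k}/\sqrt{p^2+q^2}$ over $k=1,\dots,n$ and dividing by the cosine yields the stated bound. The only degenerate case is $\slope(H)=p/q$, where $H$ is parallel to the geodesics, necessarily $n=1$, and the inequality is vacuously infinite. There is no serious obstacle here: the substantive point is computing the transversal width of $C^{(i)}_{p/q}$, which is a clean consequence of $\det A=1$ combined with how $A$ transforms the vertical direction; the rest is elementary Euclidean trigonometry.
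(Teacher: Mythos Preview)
Your proposal is correct and follows essentially the same approach as the paper: both arguments compute the transversal width of $C^{(j)}_{p/q}$ as $W_j/\sqrt{p^2+q^2}$ via area preservation (since $\det A=1$) together with the geodesic length $L_j\sqrt{p^2+q^2}$, then decompose $H$ into its cylinder pieces and use the cosine factor between $\slope(H)$ and the orthogonal slope $-q/p$. Your projection phrasing is slightly more careful about the first and last pieces of $H$ (which need not have both endpoints on cylinder boundaries), but this is a matter of presentation rather than a genuinely different route.
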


\begin{proof}
Any cylinder in Equation~\eqref{EquationCylinderDecomposition} has length 
$|C^{(j)}_{p/q}|=L_j\sqrt{q^2+p^2}$ and euclidean area $L_jW_j$. 
Let $\widetilde{H}_j\subset C^{(j)}_{p/q}$ be a segment with endpoints in $\partial C^{(j)}_{p/q}$. 
If $\slope(\widetilde{H}_j)=-q/p$, which is orthogonal to $p/q$, then 
$|\widetilde{H}_j|=W_j(q^2+p^2)^{-1/2}$. If $\widetilde{H}_j$ has a different slope, then its length increases by the inverse of the cosinus of the angle between $\slope(\widetilde{H}_j)$ and $-q/p$. 
The segment $H$ is union of $n$ segments 
$\widetilde{H}_{j_1},\dots,\widetilde{H}_{j_n}$ as above. The Lemma follows.
\end{proof}

\subsection{Proof of Proposition~\ref{PropositionHittingSpecialTimes}}
\label{SectionProofPropositionHittingSpecialTimes}

Let $X$ be an origami as in Theorem~\ref{TheoremGeneralCriterion} and $\alpha=[a_1,a_2,\dots]$ irrational. Fix any two points $p,\widetilde{p}$ in $X$, with $p$ outside $(X,\alpha)$-singular leaves. 

Consider first the case $n=2k$. Set 
$
A:=g(a_1,\dots,a_{2k})
$ 
and let $X_k\in\sltwoz\cdot X$ be the surface with $A\cdot X_k=X$. 
Let $\psi:X_k\to X$ be an affine homeomorphism with $D\psi=A$, as in \S~\ref{SectionAffineHomeomorphisms}. 
Set $\alpha_{2k}:=A^{-1}\cdot\alpha$ and $p_{2k}/q_{2k}:=A\cdot 0$ as in 
Equations~\eqref{EquationActionSL(2,Z)SlopesIrrational} and~\eqref{EquationConvergentsProjectiveAction}. 
Recalling Equation~\eqref{EquationCylinderDecomposition}, 
for $i=1,\dots,l$ let $C^{(i)}_0$ be the cylinder in the decomposition of $X_k$ in vertical slope 
$p/q=0$. Let $W_i$ be the width of  $C^{(i)}_0$. The cylinder decomposition of $X$ in slope $p_{2k}/q_{2k}$ is 
$X=C^{(1)}_{p/q}\cup\dots\cup C^{(1)}_{p/q}$, where $C^{(i)}_{p/q}:=\psi(C^{(i)}_0)$. 
Consider $\beta$ irrational such that 
$$
\left\{
\begin{array}{l}
A^{-1}\cdot\beta <-1\\
\cos\big|\arctan(\beta)-\arctan(-q_{2k}/p_{2k})\big|>1/2.
\end{array}
\right.
$$
The slope $\widetilde{\beta}=-q_{2k}/p_{2k}$ is orthogonal to $p_{2k}/q_{2k}$ and satisfies the first condition above, indeed recalling 
Equation~\eqref{EquationMatrixContinuedFraction} we have 
$$
A^{-1}\cdot\frac{-q_{2k}}{p_{2k}}=
\begin{pmatrix}
q_{2k} & -p_{2k} \\
-q_{2k-1} & p_{2k-1}
\end{pmatrix}
\cdot\frac{-q_{2k}}{p_{2k}}=
\frac{-(q_{2k}^2+p_{2k}^2)}{q_{2k}q_{2k-1}+p_{2k}p_{2k-1}}
<-a_{2k}<-1.
$$
The same condition is satisfied by some irrational slope $\beta$ close to $\widetilde{\beta}$, by continuity of the projective action of $A$. The second condition on $\beta$ is easily satisfied. 

Let $\widetilde{H}\subset X$ be a straight segment passing through $\widetilde{p}$ with 
$\slope(\widetilde{H})=\beta$. Consider the segment 
$H:=\psi^{-1}(\widetilde{H})\subset X_k$. We have $\slope(H)=A^{-1}\cdot\beta$, which is irrational since $\beta$ is irrational. Since $H$ has irrational slope, it is not a subsegment of a saddle connection of $X_k$. Therefore $H$ can be extended along the slope $A^{-1}\cdot\beta$ and we can assume that it has length $|H|=\cK$. If $H$ crosses the vertical cylinders $C^{(j_1)}_0,\dots,C^{(j_n)}_0$ of $X_k$, then we have $W_{j_1}+\dots+W_{j_n}\leq\cK+1$. 
The second condition on $\beta$ and Lemma~\ref{LemmaLengthTransversalSegment} imply  
$$
|\widetilde{H}|\leq
\frac{2(\cK+1)}{\sqrt{q_{2k}^2+p_{2k}^2}}\leq
\frac{2(\cK+1)}{q_{2k}}=r_{2k}.
$$

Since $p$ does not belong to any $(X,\alpha)$-singular leaf, then $\psi^{-1}(p)$ does not belong to any $(X_k,\alpha_{2k})$-singular leaf and it has infinite positive orbit. 
Let $V$ be a segment in $X_k$ which has an endpoint in $\psi^{-1}(p)$, with $\slope(V)=\alpha_{2k}$, length $|V|=\cK$. By assumption $V$ intersects $H$. In other words, we have $t>0$ with
$$
\phi_{\alpha_{2k}}\big(t,\psi^{-1}(p)\big)\in H
\quad
\textrm{ and }
\quad
0\leq t\leq |V|,
$$ 
Consider $T>0$ such that 
$
\psi\circ \phi_{\alpha_{2k}}(t,\cdot)=\phi_\alpha(T,\cdot)\circ \psi
$, 
so that we have 
$$
\phi_\alpha(T,p)=
\phi_\alpha\big(T,\psi \big(\psi^{-1}(p)\big)\big)=
\psi\big( \phi_{\alpha_{2k}}\big(t,\psi^{-1}(p)\big)\big)
\in \psi(H)=\widetilde{H}.
$$
Both $\widetilde{p}$ and $\phi_\alpha(T,p)$ belong to $\widetilde{H}$. Hence
$
\big|
\phi_\alpha(T,p)-\widetilde{p}
\big|
\leq 
|\widetilde{H}|
\leq 
r_{2k}
$. 
We have 
$$
T\leq |\psi(V)|\leq 
\|A\|\cdot|V|\leq 
(p_{2k}+q_{2k}+p_{2k-1}+q_{2k-1})\cdot|V|\leq
4\cK\cdot q_{2k}.
$$
Since $\widetilde{p}$ is arbitrary, we get 
$$
T(X,\alpha,p,r_{2k})\leq 4\cK\cdot q_{2k}.
$$

Now consider the case $n=2k-1$. Set 
$
A:=g(a_1,\dots,a_{2k-2},a_{2k-1})
$ 
and let $X_k\in\sltwoz\cdot X$ be the surface with $A\cdot X_k=X$. 
Let $\psi:X_k\to X$ be an affine homeomorphism with $D\psi=A$. 
Let $\alpha_{2k-1}$ be the slope related to $\alpha$ by Equation~\eqref{EquationActionSL(2,Z)SlopesIrrational}, that is 
$
\alpha=A\cdot \alpha_{2k-1}^{-1}
$. 
We have $A\cdot \infty=p_{2k-1}/q_{2k-1}$ by 
Equation~\eqref{EquationConvergentsProjectiveAction}. Moreover 
$
-1<A^{-1}\cdot(-q_{2k-1}/p_{2k-1})<0
$, 
indeed Equation~\eqref{EquationMatrixContinuedFraction} gives
$$
A^{-1}\cdot\frac{-q_{2k-1}}{p_{2k-1}}
=
\begin{pmatrix}
q_{2k-2} & -p_{2k-2} \\
-q_{2k-1} & p_{2k-1}
\end{pmatrix}
\cdot\frac{-q_{2k-1}}{p_{2k-1}}
=
\frac{-q_{2k-2}q_{2k-1}-p_{2k-2}p_{2k-1}}{q_{2k-1}^2+p_{2k-1}^2}.
$$
Therefore we can chose an irrational slope $\beta$ such that 
$$
\left\{
\begin{array}{l}
-1 < A^{-1}\cdot\beta <0\\
\cos\big|\arctan(\beta)-\arctan(-q_{2k-1}/p_{2k-1})\big|>1/2.
\end{array}
\right.
$$
Let $\widetilde{H}\subset X$ be a segment passing through $\widetilde{p}$ with 
$\slope(\widetilde{H})=\beta$ such that $H:=\psi^{-1}(\widetilde{H})$ is a segment in $X_k$ with length $|H|=\cK$. Let $V\subset X_k$ be a segment having an endpoint in $\psi^{-1}(p)$, with 
$\slope(V)=1/\alpha_{2k-1}$ and length $|V|=\cK$. By assumption we have $H\cap V\not=\emptyset$. The remaining part of the argument is as in case $n=2k$ and is left to the reader.
Proposition~\ref{PropositionHittingSpecialTimes} is proved. $\qed$

\appendix

\section{Proof of Lemma~\ref{LemmaLowerBoundHittingTime}}
\label{SectionProofLowerBoundHittingTime}

Let $X,\alpha,p$ be as in the statement. For $0\leq w<1$ and any $r>0$ small enough, the $r$-neighbourhood of the orbit segment 
$
\{\phi_\alpha(t,p):0\leq t\leq r^{-w}\}
$ 
has area bounded by $2\cdot r^{1-w}$. Thus $H(X,\alpha,p)\geq1$. Lemma~\ref{LemmaLowerBoundHittingTime} is proved for those slopes with $w(\alpha)=1$. Now assume $w(\alpha)>1$ and take any $w$ with $1<w<w(\alpha)$. We can assume $0<\alpha<1$, as in \S~\ref{SectionProofTheoremGeneralCriterion}. 
Write $\alpha=[a_1,a_2,\dots]$. There exist infinitely many $n$ with 
\begin{equation}
\label{EquationAppendix(1)}
a_{n+1}\geq q_n^{w-1}.
\end{equation}
It is not a loss of generality to assume that all $n$ as above are even, that is $n=2k$ (otherwise repeat the proof replacing the vertical slope $p/q=0$ by the horizontal $p/q=\infty$). 
Modulo subsequences, assume that there exists $X_0$ in the orbit $\sltwoz\cdot X$ such that
$$
g(a_1,\dots,a_{2k})\cdot X_0=X
\quad\text{ for any }k.
$$
Recall Equation~\eqref{EquationCylinderDecomposition} and let 
$X_0=C^{(1)}_0\cup\dots\cup C^{(l)}_0$ be the cylinder decomposition of $X_0$ in vertical slope 
$p/q=0$, where any $C^{(i)}_0$ has width $W_i$ and length $L_i$. Let $\widetilde{p}\in X_0$ be a point in the boundary of some vertical cylinder and not on any $(X_0,\alpha_{2k})$-singular leaf. According to Equation~(6.9) in~\cite{KimMarcheseMarmi}, if  
$
\alpha_{2k}<\min_{1\leq i\leq l}L_i^{-1}
$ 
then there exists $i$ with 
\begin{equation}
\label{EquationAppendix(3)}
\phi_{\alpha_{2k}}(t,\widetilde{p})\in C^{(i)}_0
\quad\text{ for }\quad
0<t<W_i\cdot\frac{\sqrt{1+\alpha_{2k}^2}}{\alpha_{2k}}.
\end{equation}
Since $\alpha_{2k}=(a_{2k+1}+\alpha_{2k})^{-1}<<1$, then 
Equation~\eqref{EquationAppendix(3)} holds. Equation~\eqref{EquationAppendix(1)} gives
\begin{equation}
\label{EquationAppendix(4)}
W_i\cdot\frac{\sqrt{1+\alpha_{2k}^2}}{\alpha_{2k}}
\geq
\frac{W_i}{\alpha_{2k}}
\geq 
a_{2k+1}W_i 
\geq 
a_{2k+1}
\geq
q_{2k}^{w-1}.
\end{equation}
Set $r_0:=1/4$. Equation~\eqref{EquationAppendix(3)} and Equation~\eqref{EquationAppendix(4)} imply that  for any $\widetilde{p}\in X_0$ there exists a cylinder $C^{(i)}_0$ and vertical closed geodesic  
$\sigma\subset C^{(i)}_0$ such that 
\begin{equation}
\label{EquationAppendix(5)}
\phi_{\alpha_{2k}}(t,\widetilde{p})\not\in N(\sigma,r_0)
\quad\text{ for }\quad
0\leq t\leq q_{2k}^{w-1}/2,
\end{equation}
where $N(\sigma,r)$ is the $r$-neighbourhood of $\sigma$. 
Set $A:=g(a_1,\dots,a_{2k})$ and let $\psi:X_0\to X$ be an affine homomorphism with $D\psi=A$. 
Recall that $p_{2k}/q_{2k}=A\cdot 0$ and $\alpha=A\cdot \alpha_{2k}$. Moreover 
$
\psi\circ \phi_{\alpha_{2k}}(t,\cdot)=\phi_\alpha(\kappa t,\cdot)\circ \psi
$, 
where the stretching factor of $A$ on vectors with slope $\alpha_{2k}$ satisfies 
$\kappa>q_{2k}/\sqrt{2}$ (Equation~(6.11) in~\cite{KimMarcheseMarmi}). 
Equation~\eqref{EquationAppendix(5)} implies that for any $p\in X$ there exists a cylinder $C\subset X$ with $\slope(C)=p_{2k}/q_{2k}$ and a closed geodesic $\widetilde{\sigma}\subset C$ with 
$$
\phi_\alpha(t,p)\not\in N\big(\widetilde{\sigma},r_0\cdot(q_{2k}^2+p_{2k}^2)^{-1/2}\big)
\quad\text{ for }\quad
0\leq t\leq (q_{2k}^{w-1}/2)\cdot(q_{2k}/\sqrt{2}),
$$
where the size of the neighbourhood of $\widetilde{\sigma}$ is derived from 
Lemma~\ref{LemmaLengthTransversalSegment}. Since $\alpha<1$ and thus $p_{2k}<q_{2k}$, setting 
$r_k:=(q_{2k}\sqrt{32})^{-1}$ we obtain 
$
T(X,\alpha,p,r_k)\geq q_{2k}^w/\sqrt{8}
$ 
and thus 

$$
H(X,\alpha,p)
\geq 
\limsup_{k\to\infty}
\frac{\log T(X,\alpha,p,r_k)}{|\log r_k|}
\geq
\limsup_{k\to\infty}
\frac{w\log q_{2k}-\log\sqrt{8}}{\log q_{2k}+\log\sqrt{32}}=w.
$$
Therefore $H(X_\cO,\alpha,p)\geq w(\alpha)$ since $w<w(\alpha)$ is arbitrary. Lemma~\ref{LemmaLowerBoundHittingTime} is proved. $\qed$

\end{document}